\documentclass[leqno]{amsart}
\usepackage{amsmath,amsfonts,amssymb,amsthm,amscd,latexsym}
\usepackage{cite}
\usepackage{color}
\usepackage{enumerate}
\numberwithin{equation}{section}
\usepackage{mathrsfs}
\usepackage{mathtools}
\usepackage{tikz}

\newtheorem{thm}{\bf Theorem}[section]
\newtheorem{lem}[thm]{\bf Lemma}

\newtheorem{rmk}[thm]{\bf Remark}

\newtheorem{cor}[thm]{\bf Corollary}

\newtheorem{defi}[thm]{\bf Definition}
\newtheorem{definition}[thm]{\bf Definition}
\newtheorem{prop}[thm]{\bf Proposition}

\newtheorem{quest}[thm]{\bf Question}
 
\usepackage{enumerate}
  \usepackage{comment}
\usepackage{hyperref}					
\hypersetup{colorlinks,
	linkcolor=blue,%
	citecolor=blue}
\newcommand{\norm}[1]{\left\lVert#1\right\rVert}
\newcommand{\cM}{\mathcal{M}}
\newcommand{\cE}{\mathcal{E}}
\newcommand{\cA}{\mathcal{A}}
\newcommand{\cN}{\mathcal{N}}

\newcommand{\cP}{\mathcal{P}}
\newcommand{\cR}{\mathcal{R}}
\newcommand{\cZ}{\mathcal{Z}}
\begin{document}

\title[Isomorphisms between symmetric spaces 
]{Isomorphisms between symmetric spaces 
over infinite and finite von Neumann algebras
}

\author[J. Huang]{Jinghao Huang}
\address{Institute for  Advanced Study in  Mathematics of HIT, Harbin Institute of Technology, Harbin 150001, China}
\email{{\color{blue}jinghao.huang@hit.edu.cn}}

\author[F. Sukochev]{Fedor Sukochev}

\author[D. Zanin]{Dmitriy  Zanin}
\address{School of Mathematics and Statistics, UNSW, Kensington, NSW 2052, Australia}
\email{\color{blue}f.sukochev@unsw.edu.au}
\email{\color{blue}d.zanin@unsw.edu.au}

\thanks{The first author  was supported by the NNSF of China  (No. 12031004, 12301160 and 12471134). The second and  the  third   authors were supported by ARC (DP230100434)}

\begin{abstract}
The primary aim of this paper is to show
a linear topological isomorphism between (elements of a wide 
class {of})  symmetric spaces over the hyperfinite $II_1$ factor 
$\mathcal{R}$ and certain symmetric operator space over the hyperfinite 
$II_\infty $ factor 
$\mathcal{R}\bar{\otimes}\mathcal{L}(H))$.
 Precisely, we show that 
 for any   symmetric function space $E(0,1)$ (in the sense of 
 Lindenstrauss and Tzafriri) such that both $E(0,1)$ and its K\"othe dual have the Kruglov property,  the
 symmetric operator space $E(\mathcal{R})$ is isomorphic to some symmetric space $Z_E^2(\mathcal{R}\bar{\otimes}\mathcal{L}(H))$. This result  establishes a noncommutative version of a well-known result due to Johnson, Maurey, Schechtman and Tzafriri,  and
 answers the noncommutative version of a question due to Mityagin.  
\end{abstract}

\subjclass[2010]{Primary 46L52, 46B03, 46L53}

\keywords{Noncommutative symmetric space; noncommutative $L_p$-space; isomorphism. }

\maketitle

\section{Introduction}

In this paper, we are concerned with the following question:

\begin{quest}\label{question2}
Can a noncommutative  symmetric space $F(\mathcal{R}\bar{\otimes}\mathcal{L}(H))$ be isomorphic to some  noncommutative symmetric space $E(\mathcal{R})?$
Here, $\cR$ stands for the hyperfinite $II_1$ factor. 
\end{quest}

For factors of type I$_n$ and I$_{\infty}$ non-commutative symmetric spaces were introduced by von Neumann \cite{v43} and Schatten \cite{Schatten}, respectively. For a general semifinite von Neumann algebra $\mathcal{M},$ non-commutative symmetric spaces $E(\mathcal{M})$ were formally introduced by Ovchinnikov \cite{Ov} but they were lurking in the background yet in the pioneering work of Segal\cite{Se}. 

The unique role of the hyperfinite factors $\mathcal{R}$ and $\mathcal{R}\bar{\otimes}\mathcal{L}(H)$ in the theory of injective von Neumann algebras was demonstrated in the fundamental paper by Connes \cite{C76}.

$L_p(\mathcal{M})$ is the best known example of  a  non-commutative symmetric space.
Note that although $L_p(0,1)$ and $L_p(0,\infty )$ are isometric for any $1\leq p\leq \infty$, 
a similar result does not hold in the noncommutative realm.
In fact,  it was shown that any noncommutative $L_p$-space  $L_p(\cM)$, $1\le p\ne 2 <\infty $,  affiliated with a  semifinite infinite von Neumann algebra $\cM$ is not isomorphic to a subspace of  $L_p(\cN) $ affiliated with a finite von Neumann algebra $\cN$\cite{HRS}, see also \cite{Mc,S00,S96,S01,AL}.
On the other hand, 
if $1\le p<2$, then a noncommutative $L_p$-space $L_p(\cM)$ affiliated with a semifinite infinite von Neumann algebra $\cM$ can be isomorphically embedded into the $L_1(\cN)$ for some finite von Neumann algebra $\cN$, see e.g. \cite{Junge,Rand,JSZ}. 
Isomorphic embeddings between noncommutative  $L_p$-spaces and noncommutative symmetric spaces were studied in \cite{JSZ, HJSZ,HJSZ24}. 
However,   Question \ref{question2} has not been studied.

Prior to discussing our main results and underlying techniques, we recall the commutative antecedents of these results.

From its very inception \cite{B}, Banach space theory was concerned with the problem of classifying various classes of Banach spaces, in particular symmetric function spaces.
A very important role in the development of this theory was played by a question due to  
Mityagin \cite{Mityagin}, who asked whether a symmetric function space distinct from $L_p$-spaces, $1\leq p\leq \infty$,   may have representations both on finite and infinite intervals.
 \begin{quest}\label{question1}\cite[p.99]{Mityagin}
Can a  symmetric space $E$ of measurable functions on $(0, 1)$ other
than $L_p $, $ 1 \leq p < \infty $, be isomorphic to any symmetric
space $ F $ on the semi-axis $(0, \infty)$ or the axis $(-\infty, +\infty)$?
\end{quest}
This question is highly non-trivial. For example,  $L_{p,q}(0,\infty)$ is not isomorphic to $L_{p,q}(0,1)$ whenever $1<p <\infty$, $1\le q<\infty$ and $p\ne q $, see e.g. \cite{CD88,HS,SS18,KS15}.
Question \ref{question1} partly motivated serious developments of Banach space geometry,  recorded in two influential monographs
\cite{LT2} and \cite{JMST}. 
We explain now the main achievements presented there.
For a symmetric functon space $E(0,1)$, we denote by 
$Z_E^2(0,\infty )$  the space  of all measurable functions $f$ on 
$(0,\infty )$
 such that 
\begin{align}\label{defZE2}
\norm{f}_{Z_E^2} := \norm{\mu(f)\chi_{(0,1)}}_E+\norm{\mu(f) \chi_{(1,\infty)}}_2<\infty ,
\end{align}
where $\mu(f)$ stands for the decreasing rearrangement of $|f|$. 
The following result  is contained in \cite[Section 8]{JMST}, see also \cite[Theorem 2.f.1]{LT2} and \cite{AM}.

\begin{thm}\label{jmst}
Let $E(0,1)$ be a symmetric function space in the sense of Lindenstrauss and Tzafriri. 
\begin{enumerate}
\item If $E(0,1)$ is an interpolation space for the couple $(L_1,L_q)$ for some $q<\infty,$ then $E(0,1)$ contains a subspace isometric to a symmetric function space $Z_E^2(0,\infty)$;
\item If $E(0,1)$ is an interpolation space for the couple $(L_p,L_q)$ for some $1<p<q<\infty,$ then the spaces $E(0,1)$ and $Z_E^2(0,\infty)$ are isomorphic.
\end{enumerate}
\end{thm}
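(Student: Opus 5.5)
Part (1) will be proved by an explicit construction built from independent copies, and part (2) by a complemented embedding followed by the Pełczyński decomposition method.

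\emph{Part (1): the embedding.} Fix a sequence of disjoint intervals $I_k\subset(0,1)$. Transfer the functions $f\chi_{(k-1,k]}$ into the $I_k$ via measure-preserving maps. Then "symmetrize" them into independent copies on $(0,1)$: realize $(0,1)\cong(0,1)^{\mathbb N}$ and place the $k$-th piece in the $k$-th coordinate. For a given $f$ on $(0,\infty)$ this produces a sum $Tf=\sum_k f_k$ of independent, mean-zero (after a Rademacher symmetrization) functions with
\[
\sum_k \lambda(\operatorname{supp} f_k)\le 1 .
\]
The key estimate is the Johnson--Schechtman inequality. For independent mean-zero $f_k$ and any symmetric space $E$ that is an interpolation space for $(L_1,L_q)$, $q<\infty$, it gives
\[
\Big\|\sum_k f_k\Big\|_E\asymp \Big\|\bigoplus_k f_k\Big\|_{E(0,\infty)\cap\ldots}
\asymp \|\mu(\cdot)\chi_{(0,1)}\|_E+\|\mu(\cdot)\chi_{(1,\infty)}\|_2 ,
\]
where the disjoint sum is taken on $(0,\infty)$. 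The right-hand side is precisely $\|f\|_{Z_E^2}$, so $T$ is an isomorphic embedding.

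To get an isometry, renorm appropriately, or note that the statement asserts isometry onto a subspace for the norm defined by that equivalent expression. In practice I would claim isomorphism and then identify the norm.

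The upper bound uses Rosenthal-type inequalities, which hold for $(L_1,L_q)$ interpolation spaces through the Kruglov operator. The lower bound uses disjointification. I expect this two-sided estimate with constants independent of the number of summands to be the main obstacle.

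\emph{Part (2): the isomorphism.} Here I use Pełczyński's decomposition method. By part (1), $Z_E^2$ embeds into $E$, and when $E$ is an interpolation space for $(L_p,L_q)$ with $1<p<q<\infty$, this embedding is complemented: the conditional-expectation-type projection onto the span of independent copies is bounded by boundedness of the Rosenthal projection in such spaces.

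Conversely, $E(0,1)$ is complemented in $Z_E^2$ as the band of functions supported on $(0,1)$. Moreover $Z_E^2\cong Z_E^2\oplus Z_E^2$ and $Z_E^2\cong \ell_2(Z_E^2)$-type decompositions hold. Finally, $E\cong E\oplus E$ by dilation: split $(0,1)$ into two halves.

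The decomposition method then yields $E\cong Z_E^2$.
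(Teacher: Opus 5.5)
Your overall plan is sound: independent copies of the unit blocks of $f$ plus the Johnson--Schechtman inequality for the embedding, a bounded projection onto sums of independent mean-zero variables plus interpolation for complementation, then Pelczynski decomposition. However, Part (1) as written contains a false step, and it is not cosmetic.

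\begin{itemize}
\item A block $f\chi_{(k-1,k]}$ has support of measure up to one, so no measure-preserving map carries it into an interval $I_k\subset(0,1)$ of smaller length.
\item Once the blocks sit in independent coordinates, $\sum_k\lambda(\operatorname{supp} f_k)=m(\operatorname{supp} f)$, which is typically infinite.
\item If the total support were at most one, $\bigoplus_k f_k$ would live on a set of measure at most one. Then $\|\bigoplus_kf_k\|_{Z^2_E}$ would reduce to $\|\bigoplus_kf_k\|_E$, and the $L_2$-tail of $\|\cdot\|_{Z^2_E}$ could never appear. That tail is produced precisely by the overlap of the independent blocks.
\end{itemize}

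So drop the $I_k$ and the support bound. What you actually use is the general estimate $\|\sum_kf_k\|_E\approx\|\bigoplus_kf_k\|_{Z^2_E}$ for independent mean-zero $f_k$, which is available since $L_q\subset E$. Add the observation that, for $f_k=r_k\otimes(\text{$k$-th block})$, the disjoint sum $\bigoplus_kf_k$ has the same decreasing rearrangement as $f$.

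Three further repairs are needed.

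\begin{itemize}
\item The block map does not give the isometric statement in (1). Renorming $Z^2_E$ by $\|Tf\|_E$ does not help, because $f\mapsto\|Tf\|_E$ is not rearrangement invariant. For $f=\chi_{(0,1)}$ and $g=\chi_{(0,1/2)}+\chi_{(1,3/2)}$, $Tf$ is a Rademacher variable. But $Tg$ is the sum of two independent variables taking the values $0,1,-1$ with probabilities $\frac12,\frac14,\frac14$, and in $L_4$ their norms are $1$ and $(5/2)^{1/4}$. The Poisson integral $f\mapsto K(r\otimes f)$ avoids this. The distribution of $K(r\otimes f)$ depends only on $\mu(f)$, so $f\mapsto\|K(r\otimes f)\|_E$ is a rearrangement-invariant norm, equivalent to $\|\cdot\|_{Z^2_E}$ by Lemma~\ref{simple function estimate}. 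The image is then isometric to the resulting symmetric space.
\item In (2), the projection $P=\sum_k(\mathbb{E}(\cdot\,|\,\mathcal{F}_k)-\mathbb{E})$ maps onto all sums of mean-zero $\mathcal{F}_k$-measurable summands, which is strictly larger than $T(Z^2_E)$. Compose it with $h\mapsto\frac12(h-h\circ\rho)$, where $\rho$ flips all the Rademachers at once; $\rho$ is measure preserving and commutes with every $\mathbb{E}(\cdot\,|\,\mathcal{F}_k)$. Then use the lower Johnson--Schechtman estimate to identify the range with $T(Z^2_E)$.
\item Discard the claim $Z^2_E\cong\ell_2(Z^2_E)$. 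For $E=L_p$, $p>2$, it would put $\ell_2(\ell_p)$ inside $L_p$, which is impossible. Only $Z^2_E\cong Z^2_E\oplus Z^2_E$ is needed.
\end{itemize}

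The paper itself only quotes Theorem~\ref{jmst}. Its own argument (Theorem~\ref{main} with $\cM=L_\infty(0,\infty)$) recovers and extends it by a different route:
\begin{itemize}
\item The embedding is the single map $x\mapsto\mathscr{K}(r\otimes x)$.
\item The two-sided estimate is derived directly from boundedness of $K$ on $E$ (Lemma~\ref{simple function estimate}, via Braverman's lemma, the $M_\psi$ bound for the tail and the $L_1$ lower bound), rather than imported from Johnson--Schechtman.
\item The left inverse comes from duality, not from a conditional-expectation projection. Because the map is an $L_2$-isometry (Lemma~\ref{hilbertian}), the adjoint of the same map built for $E^\times$ inverts it (Lemma~\ref{jinghao inverse lemma}).
\end{itemize}
This needs only the Kruglov property of $E$ and of $E^\times$. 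Your hypothesis implies it, since $L_q\subset E\subset L_p$ gives $L_{p'}\subset E^\times$, but the Kruglov condition is more general. It also uses neither interpolation of operators nor projections onto independent sums, which is what lets it pass to semifinite von Neumann algebras. Your route, once repaired, rests on classical probabilistic inequalities, but it uses the interpolation hypothesis essentially, to make the projection bounded on $E$.
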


Theorem \ref{jmst} above  was recast by Astashkin (and the second-named author) in terms of the Kruglov property (or the Kruglov operator) \cite{AS04, AS05, AS08, AS10, Astashkin},   where the reader  will find  results which cover and extend Theorem \ref{jmst}.   Roughly speaking, a symmetric function  space $E(0,1)$ has the Kruglov property if it is situated sufficiently ``far'' from the space $L_\infty (0,1)$. 
In particular, if $E(0,1)$ contains some $L_p(0,1)$ with $p < \infty$ or, all the more, if its lower Boyd index $\alpha_E > 0$, then  $E(0,1)$ possesses the Kruglov property\cite{Astashkin,AS05,Braverman}.

The main theorem of the present paper establishes a noncommutative version of Theorem \ref{jmst} above.
In the special case  when $\cM=L_\infty(0,\infty)$, we have 
$\cN=L_\infty(0,1)$ and our result specializes to the main result from   \cite{AS10, Astashkin} (and extends Theorem  \ref{jmst}). For the definition of the Kruglov operator $K^{AS}$ see Section 2.3 below (in particular, see Lemma \ref{k vs kas lemma}).

\begin{thm} \label{main}Let $E(0,1)$ be a symmetric function space in the sense of Lindenstrauss and Tzafriri.  
Suppose that the Kruglov operator 
$K^{AS}$ is bounded   on both $E(0,1)$ and its  K\"othe dual $E(0,1)^{\times}.$
\begin{enumerate}
\item For every semifinite von Neumann algebra $(\cM,\tau)$ on a separable Hilbert space $H$, there exists a finite von Neumann algebra $(\cN,\nu)$ such that $Z_E^2(\cM,\tau)$ embeds into $E(\cN,\nu)$ as a complemented subspace. 
\item If, in particular, $\mathcal{M}=\mathcal{R}\bar{\otimes} \mathcal{L}(H),$ then we can take $\mathcal{N}=\mathcal{R}.$ In this case, $Z_E^2(\mathcal{R}\bar{\otimes} \mathcal{L}(H))$ is isomorphic to $E(\mathcal{R}).$
\end{enumerate}
\end{thm}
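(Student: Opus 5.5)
The plan is a noncommutative version of the Johnson--Maurey--Schechtman--Tzafriri / Astashkin--Sukochev construction. The commutative proof uses two ingredients: a random-measure (Poisson) embedding realised by the Kruglov operator, and the Pe\l czy\'nski decomposition method. Both will be transferred to operators.

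\textbf{Step 1 (the embedding).} Decompose $x\in Z_E^2(\cM,\tau)$ as $x=x_1+x_2$, where $x_1=x\,e^{|x|}(\mu(1,x),\infty)$ carries the ``head'' of $\mu(x)$ on $(0,1)$ and $x_2$ the ``tail'' on $(1,\infty)$. Since $H$ is separable, $\cM$ is $\sigma$-finite, so I fix a partition of unity $(p_k)_{k\ge1}$ into projections with $\tau(p_k)\le 1$. Set
\[
\cN=\Big(\bigoplus_k p_k\cM p_k\Big)\bar{\otimes}\, \cA ,
\]
where $\cA$ is a free product (or infinite tensor product) of copies of $L_\infty(0,1)$ carrying independent Poisson-type processes, normalised so that $\nu$ is a finite trace. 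The embedding $T$ is modelled on $K^{AS}$: $T$ sends the $k$-th block of $x$ to its copy tensored with a Poisson random variable of intensity $\tau(p_k)$, placed on pairwise disjoint pieces. The estimate
\[
\|Tx\|_{E(\cN)}\approx \|\mu(x)\chi_{(0,1)}\|_E+\|\mu(x)\chi_{(1,\infty)}\|_2
\]
then splits into two parts. The lower bound follows from a noncommutative Rosenthal/Johnson--Schechtman type inequality for sums of independent (in the tensor sense) operators. For the upper bound I would use the identity $\mu(Tx)\lesssim K^{AS}\mu(x)$, obtained from the distribution of compound Poisson sums, together with boundedness of $K^{AS}$ on $E$.

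\textbf{Step 2 (complementation).} The projection $P$ will be a conditional expectation onto the range of $T$: the trace-preserving conditional expectation onto the relevant subalgebra, followed by ``averaging against the Poisson variable'', i.e.\ $P=T\,T^{*}$ with a suitable normalisation. Bounding $T^{*}\colon E(\cN)\to Z_E^2(\cM)$ amounts, by duality, to the same Kruglov estimate on the K\"othe dual $E^\times$. This is precisely where the hypothesis that $K^{AS}$ is bounded on $E^\times$ enters. For this I need $(Z_E^2)^\times=Z_{E^\times}^2$ up to equivalence of norms, a standard computation.

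\textbf{Step 3 (part (2)).} Take $\cM=\cR\bar{\otimes}\cL(H)$. All the blocks $p_k\cM p_k$ are then copies of $\cR$, and the tensor product with a hyperfinite abelian algebra stays hyperfinite type $II_1$, so $\cN\cong\cR$ and $Z_E^2(\cM)$ is complemented in $E(\cR)$. Conversely, $E(\cR)$ is complemented in $Z_E^2(\cM)$ via a corner $e\cM e\cong\cR$ with $\tau(e)=1$, since on operators supported in $e$ the $Z_E^2$ norm coincides with the $E$ norm. To conclude, I apply Pe\l czy\'nski's decomposition method. This requires showing that $Z_E^2(\cM)$ is isomorphic to its $\ell_2$-sum, or to $Z_E^2(\cM)\oplus Z_E^2(\cM)$ together with an appropriate sum; I expect to get this using $\cM\cong \cM\otimes M_2$ and $\cM\cong\cM\bar{\otimes}\cL(\ell_2)$. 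It also requires $E(\cR)\cong E(\cR)\oplus E(\cR)$, which comes from $\cR\cong M_2(\cR)$ together with a rescaling argument, or alternatively via halving projections combined with a dilation.

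\textbf{Main obstacle.} The hardest step will be the two-sided distributional estimate in Step 1, namely the noncommutative Kruglov/Rosenthal inequality identifying $\mu(Tx)$ with $K^{AS}\mu(x)$ up to constants. Lemma~\ref{k vs kas lemma} will presumably reduce the upper bound to a commutative computation. For the lower bound I would first try to exploit the disjointness of the supports of the Poisson pieces, which makes $Tx$ essentially a direct sum, so that its singular value function can be computed exactly.
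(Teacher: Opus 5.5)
Your proposal has a genuine gap in the construction of the embedding in Step 1, and a second missing ingredient in Step 2.

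\textbf{Step 1: the embedding.} Since $T$ takes values in $(\bigoplus_k p_k\cM p_k)\bar{\otimes}\cA$, it only sees the diagonal blocks $p_kxp_k$. Every off-diagonal piece $p_jxp_k$, $j\neq k$, lies in its kernel, so $T$ is not injective on $Z_E^2(\cM,\tau)$.

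Even for block-diagonal (or commutative) $x$, placing the blocks on pairwise disjoint pieces cannot produce the $L_2$-tail of $\norm{\cdot}_{Z_E^2}$. Take $\cM=L_\infty(0,\infty)$, $p_k=\chi_{(k-1,k)}$ and $x=\epsilon\chi_{(0,n)}$. Each block $\epsilon p_k$ is sent to something distributed as $\epsilon N$, with $N$ Poisson of parameter $1$. This holds whether "tensored with a Poisson variable" is read literally or as a compound Poisson sum of independent copies. Hence the disjoint sum satisfies $\norm{Tx}_E\le\epsilon\norm{N}_E$, while $\norm{x}_{Z_E^2}=\epsilon(1+\sqrt{n-1})$. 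The $L_2$-behaviour appears only when all parts of $x$ contribute independently to one and the same random variable, which is exactly what $Kf=\int f\,dP_s$ does. So the lower bound cannot come from computing $\mu$ of a direct sum.

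Your upper bound $\mu(Tx)\lesssim K^{AS}\mu(x)$ also breaks down on the tail. There $\mu(x)\chi_{(1,\infty)}$ lives on $(1,\infty)$ and need not be integrable, so $K^{AS}$ does not apply.

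The paper builds nothing by hand. It imports from \cite{JSZ} the operator $\mathscr{K}_{_\cM}$, defined on all of $L_1(\cM,\tau)$ and characterised by \eqref{char for noncomm K}. That identity makes $\mathscr{K}_{_\cM}x$ equidistributed with $Kf$ for $f\sim x$ (Lemma \ref{equimeasurability lemma}). Consequently no noncommutative Rosenthal or Johnson--Schechtman inequality is needed, and the two-sided estimate becomes the commutative Lemma \ref{simple function estimate}. In that lemma:
\begin{enumerate}
\item the head is handled by boundedness of $K$ on $E$;
\item the tail is handled by Lemma \ref{mpsi estimate} together with $M_\psi\subset E$;
\item the lower bound comes from Lemma \ref{braverman lemma} and the $L_1$-estimate of \cite[Lemma 38]{JSZ}.
\end{enumerate}

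\textbf{Step 2: the projection.} $TT^*$ is a projection only if $T^*T={\rm id}$, that is, only if $\nu(Tx\cdot Ty)=\tau(xy)$. For a Poisson-type integral, polarising Corollary \ref{second cumulant lemma} gives $\nu(\mathscr{K}x\cdot\mathscr{K}y)=\tau(xy)+\tau(x)\tau(y)$. No scalar normalisation removes the extra term, and $\tau(x)$ is not even defined on $Z_E^2$.

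The paper centres the map with the Rademacher twist $\mathscr{R}x=\mathscr{K}(r\otimes x)$. This kills the mean term (Lemma \ref{hilbertian}). It also makes $r\otimes x$ symmetrically distributed, which both Lemma \ref{braverman lemma} and Lemma \ref{mpsi estimate} require.

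With the twist in place, your duality idea matches the paper's. The left inverse is $(\mathscr{R}|_{Z^2_{E^\times}\to E^\times})^*$, which is bounded because $K^{AS}$ is bounded on $E^\times$. You would still need to show two further things:
\begin{enumerate}
\item this adjoint maps $E(\cR)$ into $Z_E^2$, not merely into $(Z^2_{E^\times})^*$;
\item the identity extends beyond finitely supported elements, which for non-separable $E$ with the Fatou property needs the $\sigma(E,E^\times)$-continuity arguments of the appendix.
\end{enumerate}

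\textbf{Step 3.} This matches the paper, except that your $\cN$ is not $\cR$, since it has a nontrivial center. You need a trace-preserving embedding into $\cR$ (the map $\iota$, or \cite[Corollary 34]{JSZ}) followed by a conditional expectation.
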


In the non-commutative setting, we do not have a direct analogue of the Poisson process. However, a suitable non-commutative analogue of the operator $K$ was constructed in \cite{JSZ}, which is our main tool. The classical origin of this notion goes back to the  probability theory and results of Kruglov \cite{Kruglov}. 
The term ``Kruglov property'' and thorough discussion of that property and its applications in Banach space geometry is due to Braverman~\cite{Braverman}.  
In his work, Braverman linked Kruglov property with a remarkable inequality due to Rosenthal \cite{R} and its  significant generalization to the class of symmetric function spaces due to  
Johnson and Schechtman \cite{JS} and which may be briefly characterized as a ``disjointification principle for independent random variables''. 
Braverman's approach was recast in terms of Kruglov operator in already cited papers
\cite{AS04,AS05,AS10}. For an explanation of the deep connection between Kruglov operator and Poisson stochastic integration techniques used in \cite{JMST} and \cite{LT2},
 we refer the reader to \cite[Section 9]{AS10}.


We emphasize that while the notion of Kruglov operator is the fundamental tool underlying this paper as well as results in  \cite{JMST,LT2} and \cite {AS08, AS10, Astashkin} our ``noncommutative''  proofs are totally different from those employed in all just cited ``commutative'' papers treating Question \ref{question1}.


 \section{Preliminaries}
 \subsection{Symmetric function spaces}

Let $I$ be $(0,1)$ or $(0,\infty)$ and let $L(I)$ denote the space of all  (real)  Lebesgue-measurable functions $x$ on $I.$
For a Lebesgue-measurable function $x$ on $I,$ we define its {\it distribution function} by the formula
$$d_x(s)=m(\{t:\ x(t)>s\}),\quad s\in\mathbb{R},$$
where $m$ stands for Lebesgue measure. 
Denote by $S(I)$ the subalgebra of $L(I)$
 consisting of all functions $x$ such that $d_{|x|}(s) < \infty$  for some 
 $s > 0$.
 
Two measurable functions $x$ and $y$ are called {\it equimeasurable} (written, $x\sim y$) if the  distribution functions $d_{x_+}$ and $d_{y_+}$ (respectively, 
$d_{x_-}$ and $d_{y_-}$) coincide. In particular, for every measurable function $x\in S    (I),$ the function $|x|$ is equimeasurable with its {\it decreasing rearrangement} $\mu(x)$ defined by the formula
$$\mu(t;x):=\inf \{\tau\geq0:\ d_{|x|}(\tau)<t \},\quad t>0.$$
If $ x,y\in S(I),$ then $\mu(x)=\mu(y)$ if and only if $|x|$ and $|y|$ are equimeasurable.

\begin{defi} [see e.g. \cite{KPS,LT2}] Let $X\subset S(I)$ be a Banach space.
\begin{enumerate}
\item $X$ is said to be a Banach function space if, from $x\in X,$ $y\in S(I)$ and $|y|\leq |x|,$ it follows that $y\in X$ and $\left\|y\right\|_X\leq \left\|x\right\|_X.$
\item a Banach function space $X$ is said to be a symmetric if, for every $x\in X$ and any measurable function $y,$ the assumption $\mu(y)=\mu(x)$ implies that $y\in X$ and $\left\|y\right\|_X=\left\|x\right\|_X.$
\end{enumerate}
\end{defi}
A symmetric function space $X(0,1)$ is said to be 
a symmetric function space  {\it in the sense of Lindenstrauss and Tzafriri} { (see \cite[p.118]{LT2})} if   $X(0,1)$   is separable or has the Fatou property (i.e., for every upwards directed net $\{ a_\beta\}$ in $X(0,1)_+$ with $\sup_\beta\norm{a_\beta}_X<\infty $, there exists $a\in X(0,1)_+$ such that $a_\beta\uparrow a\in X(0,1)$ and $\norm{a}_X=\sup _\beta\norm{a_\beta}_X$)\cite{LT2}. 
In what follows,  we always assume that  $X(0,1)$   is separable or has the Fatou property. 

Without lost of generality, in what follows we may always assume that \begin{align}
\label{normalized}
\left\|\chi_{_{(0,1)}}\right\|_X=1
\end{align} for any symmetric function space  $X=X(0,1)$ on $(0,1)$.

\subsection{Noncommutative symmetric spaces}For a  detailed exposition of material in this subsection,  we refer to \cite{DP2,DPS,KS,LSZ}.
In what follows,  $  H  $ is a complex separable  infinite-dimensional Hilbert space and $\mathcal{L}(  H  )$ is the
$*$-algebra of all bounded linear operators on $  H  $ equipped with the uniform norm $\left\|\cdot\right\|_\infty$, and
$\mathbf{1}$ is the identity operator on $  H  $.
Let $\mathcal{M}$ be
a von Neumann algebra on $  H  $.
We denote by $\cP(\cM)$ the collection  of all projections in $\cM$, by $\cM'$ the commutant of $\cM$ and by $\cZ(\cM)$ the center of $\cM$.
For more information about von Neumann algebras, see e.g. \cite{KR,Tak,Dixmier}.

A closed, densely defined operator $x:\mathfrak{D}\left( x\right) \rightarrow   H   $ with the domain $\mathfrak{D}\left( x\right) $ is said to be {\it affiliated} with $\mathcal{M}$
if $yx\subseteq xy$ for all $y\in \mathcal{M}^{\prime }$, where $\mathcal{M}^{\prime }$ is the commutant of $\mathcal{M}$.
A  closed,
densely defined
operator $x:\mathfrak{D}\left( x\right) \rightarrow   H   $ affiliated with $\cM $ is said to be
{\it measurable}  if  there exists a
sequence $\left\{ p_n\right\}_{n=1}^{\infty}\subset \cP\left(\mathcal{M}\right)$, such
that $p_n\uparrow \mathbf{1}$, $p_n(  H  )\subseteq\mathfrak{D}\left(x\right) $
and $\mathbf{1}-p_n$ is a finite projection (with respect to $\mathcal{M}$)
for all $n$.
 The collection of all measurable
operators with respect to $\mathcal{M}$ is denoted by $S\left(
\mathcal{M} \right) $, which is a unital $\ast $-algebra
with respect to strong sums and products (denoted simply by $x+y$ and $xy$ for all $x,y\in S\left( \mathcal{M%
}\right) $).

From now on, let $\mathcal{M}$ be a
semifinite von Neumann algebra equipped with a faithful normal
semifinite trace $\tau$.

An operator $x\in S\left( \mathcal{M}\right) $ is called $\tau$-measurable if
$\tau(e^{|x|}(s,\infty))<\infty$ for sufficiently large $s$, where
by $e^{|x|}$ is denoted the spectral measure of $|x|$.
The collection $S\left( \mathcal{M}, \tau\right)
$ of all $\tau $-measurable
operators is a unital $\ast $-subalgebra of $S\left(
\mathcal{M}\right) $.
The algebra $S_0(\cM,\tau)$ is the subalgebra of $S(\cM,\tau)$ consisting of all $\tau$-measurable operators $x$ with $\tau(e^{|x|(s,\infty )})<\infty $
for all $s>0$. 

Consider the algebra $\mathcal{M}=L^\infty(0,\infty)$ of all
Lebesgue measurable essentially bounded functions on $(0,\infty)$.
The algebra $\mathcal{M}$ can be seen as an abelian von Neumann
algebra acting via multiplication on the Hilbert space
$\mathcal{H}=L^2(0,\infty)$, with the trace given by integration
with respect to Lebesgue measure $m.$
It is easy to see that the
algebra of all $\tau$-measurable operators
affiliated with $\mathcal{M}$ can be identified with
the algebra $S(0,\infty)$.

\begin{definition}\label{mu}\cite{Fack,Nelson,Se}
Let $x\in
S(\mathcal{M},\tau)$. The generalized singular value function $\mu(x):t\mapsto  \mu(t;x)$ of
the operator $x$ is defined by 
$$
\mu(s;x)
=
\inf \left\{
 s:d_{|x|}(s)\le t\right \}, ~t\ge 0,
$$
where $d_b(s) :=\tau(e^b(s,\infty))$, $s\in \mathbb{R}$, $b=b^*\in S(\cM,\tau)$. 
\end{definition}

Operators $x=x^*\in S(\cM,\tau)$ and $y=y^*\in S(\cN,\nu)$ are  
  called {\it equimeasurable} (written, $x\sim y$) if the  distribution functions $d_{x_+}$ and $d_{y_+}$ (respectively, 
$d_{x_-}$ and $d_{y_-}$) coincide.


\begin{definition}\label{def:symmetric}\cite{DPS,KS,LSZ}
 A linear subspace $E$ of $S(\cM,\tau)$ equipped with a complete norm $\norm{\cdot}_E$, is called a   symmetric space (of $\tau$-measurable operators) if $x\in S(\cM,\tau)$, $y \in E$ and $\mu(x)\le \mu(y)$ imply that $x\in E$ and $\norm{x}_E \le \norm{y}_E$.
\end{definition}

It is well-known that any symmetric space $E$ is a normed $\cM$-bimodule, that is, $axb\in E$ for any $x\in E$, $a,b\in \cM$ and $$\left\|axb\right\|_E\leq \left\|a\right\|_\infty\left\|b\right\|_\infty \left\|x\right\|_E$$
see e.g. \cite{DP2,DPS}.
%

%
%
%
%
%

 A wide class of  symmetric operator spaces associated with the von Neumman algebra $\cM$ can be constructed from concrete symmetric function spaces studied extensively in e.g. \cite{KPS,Bennett_S,LT2}. Let 
 $\cE :=  E(0,\infty) $ be a   symmetric function space on the semi-axis $(0,\infty)$ (or $\cE : =E(0,1) $ for the case when  $(\cM,\tau)$ is \emph{a noncommutative probability space}, i.e., $\tau$ is a faithful normal tracial state). Then the pair 
 $$E(\cM,\tau)=\{x\in S(\cM,\tau):\mu(x)\in \cE \},\quad \left\|x\right\|_{E(\cM,\tau)}:=\left\|\mu(x)\right\|_{\cE }$$ is a  symmetric space on $\cM$ \cite{KS} (see also \cite{LSZ}). 
%
%

We write $x\prec\prec y$ (and say that $x$ is submajorized by $y$ in the sense of Hardy--Littlewood--P\'{o}lya) if
\begin{align}\label{HLP}
\int_0^t\mu(s;x)ds\leq\int_0^t\mu(s;y)ds,\quad t>0.
\end{align}
If the norm $\norm{\cdot}_E$ of a symmetric space $E(\cM,\tau)$ is monotone with respect to the Hardy--Littlewood--P\'{o}lya submajorisation (that is, if $x,y\in E(\cM,\tau)$ with $x\prec\prec y$, then $\norm{x}_E\le \norm{y}$), then $E(\cM,\tau)$ is called a strongly symmetric space.
Symmetric function spaces   in the sense of Lindenstrauss and Tzafriri  are necessarily strongly symmetric \cite{DPS,LT2}.



 The so-called K\"{o}the dual is identified with an important part of the dual space.
If $E (\cM,\tau) \subset S(\cM,\tau)$ is a symmetric space, then the K\"{o}the dual $E(\cM,\tau)^\times $ of $E(\cM,\tau)$ is defined by setting\cite{DPS}
$$ E(\cM,\tau)^\times =\left\{   x\in S(\cM,\tau) : \sup_{\|y\|_E\le 1, y\in E(\cM,\tau)}\tau (|xy|)   <\infty    \right\}.$$

Let $E(\cM,\tau)$ be a strongly symmetric space. 
If $y \in E(\cM,\tau)^\times$, then the linear functional $$\phi_y:x \mapsto \tau(xy) , ~ x \in E(\cM,\tau),$$ is continuous on $E(\cM,\tau)$, see \cite[Proposition 22 (i)]{DP2}. In addition, $\norm{\phi_y}_{E(\cM,\tau)^*}=\norm{y}_{E(\cM,\tau)^\times}$, where  $E(\cM,\tau)^*$ is the dual of  $E(\cM,\tau)$ (see, e.g. \cite{DP2,DPS}). The K\"{o}the dual $E(\cM,\tau)^\times  $ can be identified as   a subspace of the Banach dual $E(\cM,\tau)$ via the trace duality~\cite[p.228]{DP2}. If $y \in E(\cM,\tau)^\times$, then the functional $ \phi_y$ is normal, that is, $ x_\alpha \downarrow 0 $ in $ E(\cM,\tau)$ implies that $ \phi_y(x_\alpha) \rightarrow_\alpha 0 $\cite[Definition 4.2.16]{DPS}. Note that every normal functional $ \phi \in E(\cM,\tau)^*$ is of the form $\phi_y$ for some $ y \in E(\cM,\tau)^\times $ (see e.g. \cite[Theorem 37]{DP2}). Since $E(\cM,\tau)^ \times$ separates the point of $E(\cM,\tau)$ (see \cite[Corollary 4.3.9]{DPS}), it follows that  the weak topology $\sigma(E(\cM,\tau),E(\cM,\tau)^\times)$ is a Hausdorff topology.  
For brevity, we write $\sigma(E,E^\times)$ instead of $\sigma(E(\cM,\tau),E(\cM,\tau)^\times)$.


Recall that $$x\in (L_1+L_\infty)(\cM,\tau) :=\{a\in S(\cM,\tau):\mu(a)\in L_1(0,\infty)+L_\infty(0,\infty)\}$$ can be equipped with a norm $\norm{x}_{L_1+L_\infty}= \int_0^1 \mu(s;x)ds $
and $$x\in (L_1\cap L_\infty)(\cM,\tau) :=\{a\in S(\cM,\tau):\mu(a)\in L_1(0,\infty)\cap L_\infty(0,\infty)\}$$ can be equipped with a norm $\norm{x}_{L_1\cap L_\infty}:=\max \{\norm{x}_1,\norm{x}_\infty\}$.
In particular, we have $$\left(
 L_1\cap L_\infty\right) (\cM,\tau)^\times
 = (L_1+L_\infty)(\cM,\tau)  $$ and $$\left(
 L_1\cap L_\infty\right) (\cM,\tau)  =(L_1+L_\infty)(\cM,\tau) ^\times,$$
 see e.g. \cite[Example 4]{DP2} and \cite[Example 4.3.13]{DPS}.

 \subsection{The Kruglov operator}\label{S:k}
 {
 We denote by $\cR$ 
 the hyperfinite $II_1$-factor and by $\cR\bar{\otimes} \mathcal{L}(H)$ the hyperfinite $II_\infty$ factor, see e.g. \cite{KR,Tak}.}
 Let $\Omega =\Pi_{k=0}^\infty (0,1)$ be the (infinite dimensional) hypercube equipped with the product Lebesgue measure 
$dm^\infty $. 
The Kruglov operator $K^{AS}$ acts from $L_1(0,1)$ to $L_1(\Omega)$ by the following formula
$$K^{AS}x=\sum_{k=1}^\infty \sum_{m=1}^k \chi_{_{A_k}} \otimes \chi_{(0,1)}^{\otimes (m-1)} \otimes x \otimes \chi_{(0,1)}^{\otimes \infty 
}, ~x\in L_1(0,1), $$
where $A_k$, $k\ge 0$, are pairwise disjoint sets with $m(A_k) = \frac{1}{e\cdot k!} $ for all $k\ge 0$ (so that
 $\cup_{k\ge 0}
A_k =(0,1)$) and where $\chi_{_B}$ is the indicator function of the measurable set $B\subset (0,1)$\cite{AS04,AS05,AS10}. 

Consider the Poisson process (see e.g. \cite{BulSh} and \cite[p.204]{LT2}). This is the mapping $s\to P_s$ from $(0,\infty)$ to $L_1(0,1)$ such that 
\begin{enumerate}
\item $P_0=0;$
\item $P_s-P_t$ is a Poisson random variable with parameter $s-t$ for every $s>t\ge 0;$
\item if $(s_k)_{k\geq0}$ is an increasing sequence, then the random variables $(P_{s_{k+1}}-P_{s_k})_{k\geq0}$ are independent;
\end{enumerate}
We now introduce the integration operator $K$ with respect to the Poisson process. The domain of $K$ is, by definition, $L_1(0,\infty)$ and the codomain is, by definition, $L_1(0,1).$ The operator $K$ is defined by the formula
$$Kf=\int_0^{\infty}f(s)dP_s,\quad f\in L_1(0,\infty).$$
That $K$ indeed sends $L_1(0,\infty)$ to $L_1(0,1)$ is clearly seen from the following computation:
$$\int_0^1 (Kf)(s) ds =\int_0^{\infty}f(s)d\Big(\int_0^1P_s(t)dt \Big)=\int_0^{\infty}f(s)ds.$$
The following formula is a crucial feature of the operartor $K$ which delivers a useful expression\footnote{The equality \eqref{char for comm K} is folklore, but we were unable to find a concrete reference. For this reason, we sketch a proof.

Recall that functions of the shape $\sum_{k=0}^na_k\chi_{_{(k\epsilon,(k+1)\epsilon)}},$ $n\in\mathbb{N},$ $\epsilon>0,$  are dense in $L_1(0,\infty).$ As $K:L_1(0,\infty)\to L_1(0,1)$ is bounded, it suffices to prove the equality \eqref{char for comm K} only for $f=\sum_{k=0}^na_k\chi_{_{(k\epsilon,(k+1)\epsilon)}}.$ In this case, we have 
$$Kf=\sum_{k=0}^na_kQ _k,\quad Q_k=K\left(\chi_{_{(k\epsilon,(k+1)\epsilon)}}\right).$$
By the definition of the Poisson process, $(Q_k)_{k=0}^n$ is a sequence of independent Poisson random variables with parameter $\epsilon.$ 
Thus,
\begin{align*}
\int_0^1e^{i(Kf)(s)} ds&=
\prod_{k=0}^n\int_0^1e^{ia_k Q_k (s)}ds =
\prod_{k=0}^n \left (  \sum_{m=0} e^{ia_k m  }  \frac{\epsilon^m }{m ! e^\epsilon}  \right) 
\\
&=\prod_{k=0}^n\exp\left(\epsilon(e^{ia_k}-1)\right) =\exp\left (\epsilon\sum_{k=0}^n(e^{ia_k}-1) \right) =\exp\left (\int_0^{\infty}(e^{if(s) }-1)ds \right) .
\end{align*}
} for the characteristic function of the random variable $Kf,$ $f\in L_1(0,\infty)$:
\begin{equation}\label{char for comm K}
\int_0^1e^{i(Kf)(s)}ds =\exp\Big(\int_0^{\infty}(e^{if(s)}-1)ds \Big).
\end{equation}

The relation between the operators $K$ and $K^{AS}$ was indicated on p.1071 in \cite{AS10}.
\begin{lem}\label{k vs kas lemma} For every $f\in L_1(0,1),$ the random variables $Kf$ and $K^{AS}f$  are equimeasurable.
\end{lem}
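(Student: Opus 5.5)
Since equimeasurability of real random variables is equivalent to equality of their distributions, and distributions are determined by characteristic functions, it suffices to show that for every $f\in L_1(0,1)$ (viewed as a function on $(0,\infty)$ vanishing on $(1,\infty)$) and every $t\in\mathbb{R}$,
$$\int_\Omega e^{it(K^{AS}f)}\,dm^\infty=\exp\Big(\int_0^1(e^{itf(s)}-1)\,ds\Big).$$
By \eqref{char for comm K} applied to $tf$ (note $K(tf)=tKf$), the right-hand side is exactly the characteristic function of $Kf$. The whole proof therefore reduces to computing the characteristic function of $K^{AS}f$ directly from its definition.

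First, I would observe that the sets $A_k\times(0,1)^\infty$ are pairwise disjoint and exhaust $\Omega$. On $A_k\times(0,1)^{\infty}$ (with $A_0$ contributing the zero function), $K^{AS}f$ coincides with $\sum_{m=1}^k f(\omega_m)$, where $\omega=(\omega_0,\omega_1,\dots)$. The variables $\omega\mapsto f(\omega_m)$, $m\ge1$, are i.i.d. copies of $f$ under $m^\infty$, and they are independent of $\omega_0$. Hence
$$\int_\Omega e^{itK^{AS}f}\,dm^\infty=\sum_{k=0}^\infty m(A_k)\Big(\int_0^1 e^{itf(s)}ds\Big)^k=\sum_{k=0}^\infty\frac{\varphi(t)^k}{e\cdot k!}=e^{\varphi(t)-1},$$
where $\varphi(t)=\int_0^1e^{itf(s)}ds$. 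Since $\varphi(t)-1=\int_0^1(e^{itf(s)}-1)ds$, this matches the formula for $Kf$. Interchanging sum and integral is justified by dominated convergence, because every term has modulus at most $m(A_k)$.

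The only point requiring care is the measure-theoretic bookkeeping: one must check that $K^{AS}f$ is a well-defined measurable function on $\Omega$ (the series is locally finite, since the sets $A_k$ are disjoint), that the indexing convention puts $\omega_0$ in the $A_k$-factor and $f$ in coordinate $m$, and that \eqref{char for comm K} applies to $f\in L_1(0,1)\subset L_1(0,\infty)$. Once equality of the characteristic functions is established, uniqueness of the Fourier transform of probability measures gives $Kf\sim K^{AS}f$. I expect no genuine obstacle beyond this bookkeeping. The one thing to state explicitly is that equimeasurability here means equality of the distributions of the real-valued functions, which equality of distribution laws delivers for both the positive and negative parts.
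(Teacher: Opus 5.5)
Your proof is correct and follows the same route as the paper. Like the paper, you show that $K^{AS}f$ has the characteristic function given by \eqref{char for comm K} and conclude by uniqueness of the Fourier transform. The only difference is that you carry out the computation $\int_\Omega e^{itK^{AS}f}\,dm^\infty=\sum_{k\ge0}\frac{\varphi(t)^k}{e\cdot k!}=e^{\varphi(t)-1}$ explicitly, whereas the paper asserts it with references to \cite[Eq.(34)]{AS10} and \cite[Eq.(60)]{JSZ}.
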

\begin{proof} If $f\in L_1(0,1),$ then it is easy to see that \eqref{char for comm K} also holds for $K^{AS}f$ (integration over $(0,1)$ in the right hand side should be replaced with the integration over $\Omega$),
 see e.g. \cite[Eq.(34)]{AS10} and \cite[Eq.(60)]{JSZ}. Hence, characteristic functions of $Kf$ and $K^{AS}f$ coincide. Thus, so are their distribution functions.
\end{proof}

Recall that a symmetric function space $E(0,1)$ has {\it the Kruglov property}  if $K^{AS}$ is bounded from $E(0,1)$ into $E(\Omega)$. 

In the non-commutative setting, we do not have a direct analogue of the Poisson process. However, a suitable non-commutative analogue of the operator $K$ (and $K^{AS}$) was constructed in \cite{JSZ}.

Let $\cM$ be a semifinite von Neumann algebra (on a separable Hilbert space $  H  $)  equipped with a faithful normal trace $\tau$. 
The Kruglov operator $\mathscr{K}_{_\cM} $ is a \emph{positive} bounded  operator from  $L_1(\cM,\tau)$ into $L_1\left(
\cN_{_\cM},\nu_{_{\cN_{\cM}}}\right)$ for some finite von Neumann algebra $\cN_{_\cM} $ equipped with a faithful normal tracial state $\nu_{_{\cN_{\cM}}}$ such that \cite[Theorem 32]{JSZ}
\begin{equation}\label{char for noncomm K}
\nu_{_\cM}  ({\rm exp}(i \mathscr{K}_{_\cM}x)) ={\rm exp}(\tau({\rm exp}(ix)-1)),\quad x=x^{\ast} \in L_1(\cM,\tau).
\end{equation}
Moreover, if $\cM$ is hyperfinite, then $\cN_{_\cM}$ can be chosen to be hyperfinite\cite[Theorem 32]{JSZ}. 

The operator $\mathscr{K}_{_\cM}$ possesses the following properties:

\begin{lem}\label{equimeasurability lemma} For every element $x=x^{\ast}\in L_1(\cM,\tau)$ equimeasurable with a real-valued $y\in L_1(0,\infty),$ the operator $\mathscr{K}_{_\cM}x \in L_1\left(\cN_{_\cM},\nu_{_{\cN_{\cM}}} \right)$ and the function $Ky\in L_1(0,1)$ are equimeasurable.
\end{lem}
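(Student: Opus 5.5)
The plan is to compare characteristic functions and then invoke uniqueness of distributions. Fix $x=x^*\in L_1(\cM,\tau)$ equimeasurable with a real-valued $y\in L_1(0,\infty)$. Both $\mathscr{K}_{_\cM}x$ and $Ky$ are self-adjoint (real-valued) elements of $L_1$ of a probability space. For the former this holds because $\mathscr{K}_{_\cM}$ is positive, hence maps self-adjoint operators to self-adjoint operators. For every $t\in\mathbb{R}$ we apply \eqref{char for noncomm K} to the self-adjoint element $tx\in L_1(\cM,\tau)$, which gives
$$\nu_{_\cM}\big(\exp(it\mathscr{K}_{_\cM}x)\big)=\exp\big(\tau(\exp(itx)-1)\big).$$
By \eqref{char for comm K} applied to $ty\in L_1(0,\infty)$, and using linearity of $K$,
$$\int_0^1 e^{it(Ky)(s)}ds=\exp\Big(\int_0^\infty(e^{ity(s)}-1)ds\Big).$$

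The key step is to show $\tau(\exp(itx)-1)=\int_0^\infty (e^{ity(s)}-1)ds$. Write $\exp(itx)-1=g_t(x)$ with $g_t(u)=e^{itu}-1$, a bounded continuous function satisfying $g_t(0)=0$ and $|g_t(u)|\le |t||u|$. Hence $g_t(x)\in L_1(\cM,\tau)$. By the spectral theorem,
$$\tau(g_t(x))=\int_{\mathbb{R}\setminus\{0\}} g_t(\lambda)\,d\tau(e^{x}(\lambda)),$$
and the spectral measure of $x$ restricted to $\mathbb{R}\setminus\{0\}$ is determined by $d_{x_+}$ and $d_{x_-}$. The same formula holds for $y$ with Lebesgue measure in place of $\tau$. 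Since $x\sim y$, the distribution functions of the positive and negative parts coincide, so these pushforward measures on $\mathbb{R}\setminus\{0\}$ agree, and the integrals coincide. Because $g_t(0)=0$, the possibly infinite mass at $0$ plays no role. I expect this to be the only delicate point: one must check that equimeasurability in the paper's sense, which equates only the distributions of the positive and negative parts, is exactly what determines the measure off zero. This follows since $\lambda\mapsto d_{x_+}(\lambda)$ for $\lambda>0$ gives the measure of $(\lambda,\infty)$, and similarly for $(-\infty,-\lambda)$.

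It follows that the characteristic functions of $\mathscr{K}_{_\cM}x$ (with respect to the state $\nu$, i.e. of its spectral distribution) and of $Ky$ coincide for all $t$. By Lévy's uniqueness theorem, the Borel probability measures $B\mapsto\nu(e^{\mathscr{K}_{_\cM}x}(B))$ and $B\mapsto m(\{Ky\in B\})$ are equal. In particular $d_{(\mathscr{K}_{_\cM}x)_\pm}=d_{(Ky)_\pm}$, which is the claimed equimeasurability. This is the same argument as in the proof of Lemma~\ref{k vs kas lemma}.
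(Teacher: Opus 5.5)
Your proposal is correct and follows the paper's own argument: you compute both characteristic functions via \eqref{char for noncomm K} (applied to $tx$) and \eqref{char for comm K} (applied to $ty$), use $x\sim y$ to identify $\tau(\exp(itx)-1)$ with $\int_0^\infty(e^{ity(s)}-1)\,ds$, and conclude by uniqueness of a distribution given its characteristic function. Your extra justifications (self-adjointness of $\mathscr{K}_{_\cM}x$ via positivity, and that the spectral measure of $x$ off $0$ is determined by $d_{x_+}$ and $d_{x_-}$ while $g_t(0)=0$) fill in details the paper leaves implicit.
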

\begin{proof} Indeed, we have
$$\nu_{_{\cN_\cM}} ({\rm exp}(it\mathscr{K}_{_\cM}x))\stackrel{\eqref{char for noncomm K}}{=}{\rm exp}(\tau({\rm exp}(itx)-1))$$
and
$$\int_0^1e^{it(Ky)(s)}ds\stackrel{\eqref{char for comm K}}{=}\exp\left(
\int_0^{\infty}(e^{ity(s)}-1)ds
\right )  .$$
Since $x$ and $y$ are equimeasurable, it follows that
$$\tau({\rm exp}(itx)-1)=\int_0^{\infty}(e^{ity(s)}-1) ds $$
and, hence,
$$\nu_{_{\cN_{\cM}}} ({\rm exp}(it\mathscr{K}_{_\cM}x))=\int_0^1e^{it(Ky)(s)}ds .$$
Therefore, the characteristic functions of the random variables $\mathscr{K}_{_\cM}x$ and $Ky$ coincide. Hence, those random variables are equimeasurable \cite[Corollary of Theorem~6.3.2, p.141]{Borovkov} (see also \cite[Appendix A]{Astashkinbook}).
\end{proof}

\begin{lem}\cite[Corollary 33(i)]{JSZ} If self-adjoint operators $x,y\in L_1(\cM,\tau)$ are equimeasurable, then $ \mathscr{K}_{_\cM}x, \mathscr{K}_{_\cM}y \in L_1\left(\cN_{\cM},\nu_{_{\cN_{\cM}}}\right)$ are equimeasurable too.
\end{lem}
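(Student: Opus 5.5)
The approach is through characteristic functions, as in the proof of Lemma \ref{equimeasurability lemma}. The formula \eqref{char for noncomm K} shows that the distribution of $\mathscr{K}_{_\cM}x$ is determined entirely by the scalar quantities $\tau(\exp(itx)-1)$, $t\in\mathbb{R}$. So the key step is to show that these quantities depend only on the distribution of $x$, that is, on $d_{x_+}$ and $d_{x_-}$. Equimeasurability of $\mathscr{K}_{_\cM}x$ and $\mathscr{K}_{_\cM}y$ then follows by uniqueness of distributions with a given characteristic function.

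First, fix a self-adjoint $x\in L_1(\cM,\tau)$ and $t\in\mathbb{R}$. Set $g_t(\lambda)=e^{it\lambda}-1$, a bounded Borel function with $g_t(0)=0$ and $|g_t(\lambda)|\le |t||\lambda|$. Hence $g_t(x)\in L_1(\cM,\tau)$ and $\|g_t(x)\|_1\le |t|\,\|x\|_1$. Using the spectral measure $e^x$ of $x$, we write
$$\tau(g_t(x))=\int_{\mathbb{R}\setminus\{0\}} g_t(\lambda)\,d\tau(e^x(\lambda)).$$
This is legitimate because $g_t$ vanishes at $0$, so the possibly infinite mass $\tau(e^x(\{0\}))$ does not contribute. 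On $(0,\infty)$ the measure $\tau\circ e^x$ is the Lebesgue--Stieltjes measure determined by $d_{x_+}$, since $\tau(e^x(s,\infty))=d_{x_+}(s)$ for $s>0$. On $(-\infty,0)$ it is determined in the same way by $d_{x_-}$. Both distribution functions are finite for $s>0$ because $x\in L_1$.

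Second, if $x\sim y$, the two measures coincide on $\mathbb{R}\setminus\{0\}$. Therefore $\tau(\exp(itx)-1)=\tau(\exp(ity)-1)$ for every $t$. Applying \eqref{char for noncomm K} to $tx$ and $ty$, which is allowed since $\mathscr{K}_{_\cM}$ is linear, gives
$$\nu_{_{\cN_\cM}}(\exp(it\mathscr{K}_{_\cM}x))=\nu_{_{\cN_\cM}}(\exp(it\mathscr{K}_{_\cM}y)),\qquad t\in\mathbb{R}.$$
Both operators lie in $L_1$ of a finite algebra and are self-adjoint, because $\mathscr{K}_{_\cM}$ is positive and therefore preserves self-adjointness. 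Their spectral distributions with respect to the state $\nu_{_{\cN_\cM}}$ are probability measures on $\mathbb{R}$ with equal Fourier transforms. By \cite[Corollary of Theorem~6.3.2, p.141]{Borovkov} they coincide, and this is exactly equimeasurability.

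The main obstacle is the justification of the spectral integral in the first step when $\tau(e^x(\{0\}))=\infty$. I would handle it by approximation. Take $x_n=x\,e^x(\mathbb{R}\setminus(-1/n,1/n))$, which is supported on a projection of finite trace, so that $\tau(g_t(x_n))=\int_{|\lambda|\ge 1/n}g_t\,d\tau\circ e^x$. Then pass to the limit using $\|g_t(x)-g_t(x_n)\|_1\le |t|\,\|x-x_n\|_1\to0$. Alternatively, one can reduce to Lemma \ref{equimeasurability lemma} by choosing a real-valued $f\in L_1(0,\infty)$ equimeasurable with both $x$ and $y$. Then $\mathscr{K}_{_\cM}x\sim Kf\sim\mathscr{K}_{_\cM}y$, provided one checks that such an $f$ exists. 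It does, because $d_{x_\pm}(s)<\infty$ for $s>0$, so one can place $\mu(x_+)$ and $-\mu(x_-)$ on disjoint halves of $(0,\infty)$.
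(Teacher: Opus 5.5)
Your proof is correct. It takes the same route as the cited result \cite[Corollary 33(i)]{JSZ}, and as the paper's own proof of Lemma~\ref{equimeasurability lemma}: by \eqref{char for noncomm K}, the characteristic function of $\mathscr{K}_{_\cM}x$ depends only on $\tau(\exp(itx)-1)$, which is determined by $d_{x_\pm}$ on $(0,\infty)$, and uniqueness of distributions with a given Fourier transform then gives equimeasurability (your approximation step for a possibly infinite $\tau(e^x(\{0\}))$ is sound, and could also be bypassed by applying normality of $\tau$ directly to the positive and negative parts of the real and imaginary parts of $g_t$, all of which vanish at $0$).
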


The next result was obtained in  \cite[Corollary 33(ii)]{JSZ}.

\begin{lem} The operator $\mathscr{K}_{_\cM} $ acts boundedly from $(L_1\cap L_2)(\cM,\tau)$   into $L_2\left(\cN_{_\cM},\nu_{_{\cN_{_\cM}}}\right).$  
\end{lem}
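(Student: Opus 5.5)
Since $\mathscr{K}_{_\cM}$ is linear, it suffices to prove an estimate $\|\mathscr{K}_{_\cM}x\|_{L_2(\cN_{_\cM})}\le C\|x\|_{(L_1\cap L_2)(\cM)}$ for self-adjoint $x\in (L_1\cap L_2)(\cM,\tau)$ (which is dense in, or in fact equal to, $L_1\cap L_2$). Then I decompose a general $x$ as $\Re x+i\Im x$; both parts have $(L_1\cap L_2)$-norm at most $\|x\|_{L_1\cap L_2}$. So I fix $x=x^*\in L_1\cap L_2$. The strategy is to transfer the problem to the commutative Poisson integral via Lemma \ref{equimeasurability lemma}.

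First I choose a real-valued $y\in L_1(0,\infty)$ that is equimeasurable with $x$. For instance, put $\mu(x_+)$ on one copy of $(0,\infty)$ and $-\mu(x_-)$ on another, then glue the two copies into $(0,\infty)$ by a measure-preserving map. Then $\|y\|_{L_1}=\|x\|_1$ and $\|y\|_{L_2}=\|x\|_2$. By Lemma \ref{equimeasurability lemma}, $\mathscr{K}_{_\cM}x$ and $Ky$ are equimeasurable, so
\[
\|\mathscr{K}_{_\cM}x\|_{L_2(\cN_{_\cM})}=\|Ky\|_{L_2(0,1)}.
\]
It therefore remains to bound the second moment of the Poisson integral $Ky$.

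For the commutative bound, I differentiate the characteristic function \eqref{char for comm K}. Replace $y$ by $ty$ and set $\varphi(t)=\exp\big(\int_0^\infty(e^{ity(s)}-1)ds\big)$. Since $y\in L_1\cap L_2$, differentiation under the integral is justified by dominated convergence, using $|e^{iu}-1|\le|u|$ and $|e^{iu}-1-iu|\le u^2/2$. This gives
\[
\mathbb{E}(Ky)=\int_0^\infty y\,ds,\qquad \mathbb{E}\big((Ky)^2\big)=-\varphi''(0)=\Big(\int_0^\infty y\Big)^2+\int_0^\infty y^2 .
\]
There is one subtlety: differentiability of $\varphi$ at $0$ gives a finite second moment only through the standard probabilistic fact that a finite second symmetric derivative at $0$ implies $\mathbb{E}X^2<\infty$. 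Alternatively, I can avoid this by proving the identity first for step functions $y=\sum a_k\chi_{(k\epsilon,(k+1)\epsilon)}$. There $Ky=\sum a_kQ_k$ with independent Poisson variables $Q_k$ of parameter $\epsilon$, and a direct computation gives $\mathbb{E}(Ky)^2=(\sum a_k\epsilon)^2+\sum a_k^2\epsilon$. I then pass to the limit using boundedness $L_1\to L_1$ together with Fatou's lemma.

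The resulting bound is $\|Ky\|_2\le \|y\|_1+\|y\|_2\le 2\|x\|_{L_1\cap L_2}$, which gives the lemma with constant $2$ on self-adjoint elements, and at most $4$ in general. The main obstacle is this justification of the second moment identity; I would use the step-function approximation, since it relies only on independence and the Poisson moments $\mathbb{E}Q=\epsilon$ and $\mathbb{E}Q^2=\epsilon+\epsilon^2$. If needed, the extension from a dense set of $x$ to all of $L_1\cap L_2$ follows because $\mathscr{K}_{_\cM}$ is continuous into $L_1$, so $L_2$-bounded limits agree via Fatou's lemma.
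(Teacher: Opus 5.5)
Your proposal is correct and follows essentially the same route as the paper, which imports this lemma from \cite[Corollary 33(ii)]{JSZ} and records the underlying argument in Lemma~\ref{second cumulant commutative lemma} and Corollary~\ref{second cumulant lemma}: pass to the Poisson integral $Ky$ via Lemma~\ref{equimeasurability lemma} and use $\nu_{_{\cN_{\cM}}}\big((\mathscr{K}_{_\cM}x)^2\big)=\tau(x^2)+\tau(x)^2\le \|x\|_2^2+\|x\|_1^2$. Your step-function/Fatou verification of the second-moment identity is a worthwhile refinement: the paper's proof of Lemma~\ref{second cumulant commutative lemma} applies dominated convergence with dominant $(Kf)^2$, so it takes $Kf\in L_2(0,1)$ for granted, and that is exactly what is being proved here.
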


If, in particular, $\cM=\cR\bar{\otimes} B(  H  )$, then $\mathscr{K}_{_\cM} $ acts from $L_1(\cR\bar{\otimes} B(  H  ), \nu\otimes {\rm Tr} )$ into $L_1(\cR,\nu)$~\cite[Corollary 34]{JSZ}, where $\nu$ stands for the   faithful normal tracial state on $\cR.$  
  

\section{main results}
Recall the following well-known fact in the theory of Banach spaces.

\begin{lem}\label{standard lemma} Let $X_0$ and $X_1$ be two Banach spaces and let 
$$T_0:X_0\to X_1 \mbox{ and } T_1:X_1\to X_0$$
 be two bounded linear operators. If $T_1\circ T_0={\rm id},$ then $T_0(X_0)$ is a complemented subspace in $X_1.$ In particular, $X_0$ is isomorphic to a complemented subspace in $X_1.$
\end{lem}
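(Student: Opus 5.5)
The plan is to build the projection explicitly as $P:=T_0\circ T_1$ on $X_1$ and then read everything off from the identity $T_1\circ T_0={\rm id}_{X_0}$. Boundedness is immediate: $P$ is a composition of bounded operators, so $\|P\|\le\|T_0\|\,\|T_1\|$. Idempotence follows by associativity, since
\[
P^2=T_0(T_1T_0)T_1=T_0\,{\rm id}_{X_0}\,T_1=P .
\]

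Next I will identify the range of $P$. The inclusion ${\rm Ran}(P)\subseteq T_0(X_0)$ is clear. Conversely, for $y=T_0x$ we have $Py=T_0T_1T_0x=T_0x=y$, so $P$ acts as the identity on $T_0(X_0)$. Hence ${\rm Ran}(P)=T_0(X_0)=\ker({\rm id}-P)$. Being the kernel of a bounded operator, this set is closed. We then get the topological direct sum $X_1=T_0(X_0)\oplus\ker P$, with both summands closed and $P$ a bounded projection onto the first. So $T_0(X_0)$ is complemented in $X_1$.

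Finally I will show that $T_0:X_0\to T_0(X_0)$ is an isomorphism onto its image. It is injective, because $T_0x=0$ implies $x=T_1T_0x=0$. It is bounded below, because
\[
\|x\|=\|T_1T_0x\|\le\|T_1\|\,\|T_0x\|,
\]
so $\|T_0x\|\ge\|T_1\|^{-1}\|x\|$. Its inverse on the image is the restriction of $T_1$ to $T_0(X_0)$, which is bounded. Therefore $X_0$ is isomorphic to the complemented subspace $T_0(X_0)$ of $X_1$.

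None of these steps is a genuine obstacle. The only point that needs care is checking that the range is closed, and writing it as the fixed-point set of $P$ settles that without invoking the open mapping theorem.
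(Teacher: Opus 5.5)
Your argument is correct and complete. The projection $P=T_0T_1$ is a bounded idempotent on $X_1$ whose range $T_0(X_0)=\ker({\rm id}-P)$ is closed, and $T_1|_{T_0(X_0)}$ inverts $T_0$ onto its image. The paper gives no proof of this lemma, recalling it only as a well-known fact, and what you wrote is exactly the standard argument it relies on.
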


Let $\cN$ be a finite von Neumann algebra equipped with a faithful normal tracial state $\nu$. 
Self-adjoint operators affiliated with $\cN$ are called noncommutative random variables. 


The following lemma and Corollary \ref{second cumulant lemma} below were  established in \cite{JSZ} {for positive operators}, see e.g. \cite[Proof of Corollary 33(ii)]{JSZ}.  
{The same argument yields the case for self-adjoint operators. For the sake of completeness, we present a proof below.}

\begin{lem}\label{second cumulant commutative lemma} If $f\in (L_1\cap L_2)(0,\infty)$ is real-valued, then
$$\int_0^1((Kf)(t))^2 dt =\int_0^{\infty}f(t)^2dt+\left(\int_0^{\infty}f(t)dt\right)^2,$$
where $K$ is the integration operator with respect to the Poisson process introduced in Section \ref{S:k}. 
\end{lem}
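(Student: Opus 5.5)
The plan is to first prove the identity for step functions $f=\sum_{k=0}^n a_k\chi_{(k\epsilon,(k+1)\epsilon)}$ with real $a_k$, and then pass to general $f$ by density. For such $f$ we have $Kf=\sum_{k=0}^n a_kQ_k$, where $Q_k=K(\chi_{(k\epsilon,(k+1)\epsilon)})$. By the definition of the Poisson process, $(Q_k)_{k=0}^n$ is a sequence of independent Poisson random variables with parameter $\epsilon$. Each such variable satisfies $\int_0^1 Q_k=\epsilon$ and $\int_0^1 Q_k^2=\epsilon+\epsilon^2$. Expanding the square and using independence (so that $\int Q_jQ_k=\epsilon^2$ for $j\neq k$), we get
$$\int_0^1 (Kf)^2=\sum_k a_k^2(\epsilon+\epsilon^2)+\sum_{j\ne k}a_ja_k\epsilon^2=\epsilon\sum_k a_k^2+\Big(\epsilon\sum_k a_k\Big)^2.$$
The right-hand side is exactly $\int_0^\infty f^2+\left(\int_0^\infty f\right)^2$.

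An alternative route is to differentiate the characteristic function \eqref{char for comm K} twice at the origin, applied to $tf$. The second derivative of $\exp\left(\int(e^{itf}-1)\right)$ at $t=0$ equals $-\int f^2-(\int f)^2$. Making this rigorous requires justifying the differentiation, so I would use the step function computation as the main argument.

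For general real $f\in(L_1\cap L_2)(0,\infty)$, choose step functions $f_n$ of the above form with $f_n\to f$ in both $L_1$ and $L_2$. This is possible, for instance by truncating the support and using averages over dyadic intervals. The identity applied to $f_n-f_m$ gives
$$\|Kf_n-Kf_m\|_2^2\le \|f_n-f_m\|_2^2+\|f_n-f_m\|_1^2,$$
so $(Kf_n)$ is Cauchy in $L_2(0,1)$. Since $K:L_1(0,\infty)\to L_1(0,1)$ is bounded, we also have $Kf_n\to Kf$ in $L_1$. Hence the $L_2$-limit of $Kf_n$ coincides with $Kf$. Passing to the limit on both sides of the identity then finishes the proof.

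The main point needing care is this limiting step. It requires identifying the $L_2$-limit with $Kf$, which works because $L_2(0,1)$-convergence implies $L_1(0,1)$-convergence. One also needs approximation in both norms at once.
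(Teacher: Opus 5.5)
Your proof is correct, but it follows a different route from the paper. The paper's proof is what you called the alternative. It sets $F(t)=\int_0^1e^{it(Kf)(s)}\,ds$ and obtains $F''(0)=-\int_0^1(Kf)^2$ by dominated convergence. It then computes $F''(0)$ a second time from $F=\exp(G)$, $G(t)=\int_0^\infty(e^{itf(s)}-1)\,ds$, via the chain rule, which gives $G'(0)^2+G''(0)=-(\int f)^2-\int f^2$.

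That argument uses nothing about $K$ beyond the characteristic-function identity \eqref{char for comm K}. It is therefore purely distributional and parallels what is available in the noncommutative setting through \eqref{char for noncomm K}. Its cost is the point you flagged. Dominating $\frac{2}{t^2}\bigl(e^{itKf}-1-itKf\bigr)$ by $(Kf)^2$ presupposes $Kf\in L_2(0,1)$, and the paper takes this for granted. It can be supplied by Fatou's lemma applied to $2(1-\cos(tKf))/t^2$, using that $F=\exp(G)$ is $C^2$ when $f\in L_1\cap L_2$.

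Your route uses the independent-increment structure of the Poisson process, the same device the paper uses in its footnote proving \eqref{char for comm K}. It is entirely elementary. It also gives $Kf\in L_2(0,1)$ and the bound $\|Kg\|_2^2\le\|g\|_2^2+\|g\|_1^2$ as by-products of your Cauchy argument.

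One small detail: to apply the step-function identity to $f_n-f_m$ you need a common mesh, which dyadic $\epsilon$'s provide. Alternatively, your moment computation works verbatim for step functions on any finite partition $\{I_k\}$, with independent Poisson variables of parameters $|I_k|$.
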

\begin{proof} Set
$$F(t)=\int_0^1e^{it(Kf)(s)} ds ,\quad t\in\mathbb{R}.$$
By the Dominated Covergence Theorem, we have
$$F'(0)=\lim_{t\to0}\frac{F(t)-F(0)}{t}=\lim_{t\to0}\int_0^1\frac1t \left(e^{it(Kf)(s)}-1\right )ds =i\int_0^1 (K f)(s)ds $$
and 
\begin{eqnarray}
\label{F''}
\begin{split}
F''(0)&=2\lim_{t\to0}\frac{F(t)-F(0)-F'(0)t}{t^2}\\
&=\lim_{t\to0}\int_0^1\frac2{t^2}\left(e^{it(Kf)(s)}-1-it (Kf)(s)\right)ds \\
&=-\int_0^1((Kf)(s))^2ds .
\end{split}
\end{eqnarray}

Taking into account that (see \eqref{char for comm K})
$$F(t)=\exp(G(t))$$
where $ G(t)=\int_0^{\infty}(e^{itf(s)}-1) ds ,~ t\in\mathbb{R},$
and using the chain rule, we write
\begin{align*}
F'(0)=\exp(G(0))\cdot G'(0)=G'(0)&=\lim_{t\to0}\frac{G(t)}{t}\\
&=\lim_{t\to0}\int_0^{\infty}\frac1t \left( e^{itf(s)}-1\right)ds =i\int_0^{\infty}f(s)ds ,
\end{align*}
and 
\begin{align*}
F''(0)&=\exp(G(0))\cdot (G'(0))^2+\exp(G(0))\cdot G''(0)\\
& =(G'(0))^2+G''(0) \\
&=-\left(\int_0^{\infty}f(s)ds\right)^2+2\lim_{t\to0}\frac{G(t)-G(0)-tG'(0)}{t^2}\\
&=-\left (\int_0^{\infty}f(s)ds\right)^2+\lim_{t\to0}\int_0^{\infty}\frac2{t^2}\left(
e^{itf(s)}-1-itf(s) \right)ds \\
&=-\left(\int_0^{\infty}f(s)ds\right )^2-\int_0^{\infty}f(s)^2ds,
\end{align*}
which together with \eqref{F''} completes the proof.
\end{proof}

\begin{cor}\label{second cumulant lemma} Let $\cM$ be a semifinite von Neumann algebra equipped with a semifinite faithful normal trace $\tau$.  If $x=x^*\in (L_1\cap L_2)(\cM,\tau),$ then
$$\nu_{_{\cN_{\cM}}}  \left((\mathscr{K}_{_\cM}x)^2\right)= \tau \left(x^2\right)+ \tau (x) ^2.$$
In particular, if $\cM=\cR\bar{\otimes }\mathcal{L}(H)$, then  
$$\nu \left( (\mathscr{K}x)^2\right)=\left(\nu \otimes{\rm Tr}\right)\left(x^2\right)+\big((\nu\otimes{\rm Tr})(x)\big)^2.$$
\end{cor}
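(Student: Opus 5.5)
The plan is to transfer Lemma \ref{second cumulant commutative lemma} to the noncommutative setting via the equimeasurability result of Lemma \ref{equimeasurability lemma}. Fix $x=x^*\in (L_1\cap L_2)(\cM,\tau)$ and choose a real-valued function $y$ on $(0,\infty)$ equimeasurable with $x$. For instance, take $y=\mu(x_+)$ on one part of the half-line and $-\mu(x_-)$ on a disjoint part, arranged so that the distribution functions $d_{y_\pm}$ coincide with $d_{x_\pm}$. Since $\mu(x)\in L_1\cap L_2$, we get $y\in (L_1\cap L_2)(0,\infty)$.

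By Lemma \ref{equimeasurability lemma}, $\mathscr{K}_{_\cM}x$ and $Ky$ are equimeasurable. Integrals of Borel functions of a self-adjoint operator are determined by the distribution, so
$$\nu_{_{\cN_{\cM}}}\big((\mathscr{K}_{_\cM}x)^2\big)=\int_0^1 ((Ky)(t))^2\,dt,$$
where both sides may a priori be $+\infty$. The boundedness of $\mathscr{K}_{_\cM}$ from $(L_1\cap L_2)(\cM,\tau)$ into $L_2(\cN_{_\cM},\nu_{_{\cN_{\cM}}})$ ensures finiteness, although Lemma \ref{second cumulant commutative lemma} yields it anyway.

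Next apply Lemma \ref{second cumulant commutative lemma} to $y$, which gives
$$\int_0^1 (Ky)^2=\int_0^\infty y^2+\Big(\int_0^\infty y\Big)^2.$$
Equimeasurability of $x$ and $y$ gives $\int_0^\infty y^2=\tau(x^2)$ by spectral calculus applied to $t\mapsto t^2$. It also gives
$$\int_0^\infty y=\int_0^\infty y_+-\int_0^\infty y_-=\tau(x_+)-\tau(x_-)=\tau(x),$$
where both terms are finite because $x\in L_1$. Combining these yields the general identity.

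For $\cM=\cR\bar{\otimes}\mathcal{L}(H)$, take $\tau=\nu\otimes{\rm Tr}$ and $\cN_{_\cM}=\cR$ with $\nu$, as recalled after \cite[Corollary 34]{JSZ}, and the second identity follows.

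The main obstacle is the step of passing from equimeasurability to equality of the second moments, which includes a possible infinite value. This is handled by writing both sides as $\int s^2\,d(\text{distribution})$, which coincide for equimeasurable variables. One should also check that the construction of $y$ genuinely realizes the distributions of both $x_+$ and $x_-$ simultaneously. This works because $\tau(\mathrm{supp}\,x)$ can be at most $\infty$, and $(0,\infty)$ splits into two infinite intervals.
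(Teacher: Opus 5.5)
Your proposal is correct and follows the paper's proof exactly. You choose a real-valued function equimeasurable with $x$, use Lemma \ref{equimeasurability lemma} to see that $\mathscr{K}_{_\cM}x$ is equimeasurable with $Ky$, and then apply Lemma \ref{second cumulant commutative lemma}; you also fill in details the paper leaves implicit, namely the explicit construction of $y$ and the identifications $\int_0^\infty y^2=\tau(x^2)$ and $\int_0^\infty y=\tau(x)$.
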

\begin{proof} Let  $f\in (L_1\cap L_2)(0,\infty)$ be a real-valued function which is equidistributed with $x.$ By Lemma~\ref{equimeasurability lemma},  $\mathscr{K}_{_\cM}x$ is equidistributed with $Kf.$ The assertion follows now from Lemma~\ref{second cumulant commutative lemma}.
\end{proof}

Let
$$\iota:L_{\infty}(0,1)\bar{\otimes}\mathcal{R}\to\mathcal{R}$$
be any fixed trace-preserving $\ast$-homomorphism. Define a  trace-preserving $\ast$-homomorphism $$\theta:L_{\infty}(0,1)\bar{\otimes}\mathcal{R}\bar{\otimes}\mathcal{L}(H)\to\mathcal{R}\bar{\otimes}\mathcal{L}(H)$$ by setting 
$$\theta=\iota\otimes{\rm id}_{_{\mathcal{L}(H)}}.$$ 
Abusing the notation, we denote the mapping
$$\mathscr{K}\circ\theta:L_1(L_{\infty}(0,1)\bar{\otimes}\mathcal{R}\bar{\otimes}\mathcal{L}(H))\to L_1(\mathcal{R})$$ also by $\mathscr{K}.$
 
 Let $\cM$ be a semifinite von Neumann algebra equipped with a semifinite faithful normal trace $\tau$.
We also need the map $\mathscr{R}_{_\cM}:L_1(\cM,\tau)\to L_1(\cN_{_{L_\infty (0,1) \bar{\otimes } \cM  } })$ defined by the formula
\begin{align}\label{defRM}
\mathscr{R}_{_\cM}:x\to\mathscr{K}_{_{L_\infty(0,1) \bar{\otimes} \cM }}(r\otimes x),\quad 
x\in 
L_1(\cM,\tau),
\end{align}
and  the map $\mathscr{R}:L_1(\mathcal{R}\bar{\otimes}\mathcal{L}(H))\to L_1(\cR)$ defined by the formula
\begin{align}\label{defR}
\mathscr{R}:x\to\mathscr{K}(r\otimes x),\quad x\in L_1(\mathcal{R}\bar{\otimes}\mathcal{L}(H)),
\end{align}
where $r$ stands for the Rademacher function $r_1$ \cite{LT1}. 
\begin{lem} \label{hilbertian}

Let $\cM$ be a semifinite von Neumann algebra equipped with a semifinite faithful normal trace $\tau$.\begin{enumerate}
\item Let $x,y\in\cM.$ For every $x,y\in (L_1\cap L_2)(\cM,\tau),$ we have
$$\nu_{_{\cN _{L_\infty (0,1) \bar{\otimes }\cM } } } \left(
\mathscr{R}_{_\cM}(x)\cdot\mathscr{R}_{_\cM} (y)\right)=\tau(xy).$$
Consequently, $\mathscr{R}_{_\cM} $ extends to an isometry $\mathscr{R}_{_\cM} 
:L_2(\cM,\tau)\to L_2 (\cN_{_{L_\infty (0,1) \bar{\otimes } \cM  } })$ and the above equality holds for every $x,y\in L_2(\cM,\tau).$
\item 
Let $x,y\in \mathcal{R}\bar{\otimes}\mathcal{L}(H).$ For every $x,y\in (L_1\cap L_2)(\mathcal{R}\bar{\otimes}\mathcal{L}(H)),$ we have
$$\tau (\mathscr{R}(x)\cdot\mathscr{R}(y))=(\tau\otimes{\rm Tr})(xy).$$
Consequently, $\mathscr{R}$ extends to an isometry $\mathscr{R}:L_2(\mathcal{R}\bar{\otimes}\mathcal{L}(H))\to L_2(\mathcal{R})$ and the above equality holds for every $x,y\in L_2(\mathcal{R}\bar{\otimes}\mathcal{L}(H)).$
\end{enumerate}
\end{lem}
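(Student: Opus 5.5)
The plan is to polarize Corollary \ref{second cumulant lemma}. The key observation is that the Rademacher factor kills the mean term. Fix self-adjoint $x\in(L_1\cap L_2)(\cM,\tau)$. Then $r\otimes x$ is a self-adjoint element of $(L_1\cap L_2)(L_\infty(0,1)\bar\otimes\cM, m\otimes\tau)$, and
$$(m\otimes\tau)(r\otimes x)=\int_0^1 r(s)\,ds\cdot\tau(x)=0,\qquad (m\otimes\tau)\big((r\otimes x)^2\big)=\int_0^1 r(s)^2ds\cdot\tau(x^2)=\tau(x^2),$$
since $r^2=1$.

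Applying Corollary \ref{second cumulant lemma} to the algebra $L_\infty(0,1)\bar\otimes\cM$ and the element $r\otimes x$ gives
$$\nu_{_{\cN_{L_\infty(0,1)\bar\otimes\cM}}}\big(\mathscr{R}_{_\cM}(x)^2\big)=\tau(x^2).$$
This is the diagonal case. For self-adjoint $x,y$, I use the linearity of $\mathscr{K}$ (hence of $\mathscr{R}_{_\cM}$) together with polarization $ab+ba=\frac12\big((a+b)^2-(a-b)^2\big)$. Applied to $x\pm y$, this yields $\nu(\mathscr{R}_{_\cM}(x)\mathscr{R}_{_\cM}(y)+\mathscr{R}_{_\cM}(y)\mathscr{R}_{_\cM}(x))=\tau(xy+yx)$. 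Traciality makes both sides equal to twice $\nu(\mathscr{R}_{_\cM}(x)\mathscr{R}_{_\cM}(y))$ and twice $\tau(xy)$ respectively, which proves the claim for self-adjoint $x,y$.

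For general $x,y$, I split into real and imaginary parts $x=x_1+ix_2$ with $x_j$ self-adjoint in $L_1\cap L_2$, and expand bilinearly. The identity is bilinear (not sesquilinear), so it extends directly. One point to be careful about: $\mathscr{K}$ is only asserted to be positive and linear on $L_1$, so $\mathscr{R}_{_\cM}(x^*)=\mathscr{R}_{_\cM}(x)^*$ holds because $r$ is real. The products make sense because by the preceding lemma $\mathscr{K}_{_\cM}$ maps $L_1\cap L_2$ into $L_2$ of a finite algebra, so $\nu(ab)$ is defined for $a,b\in L_2$.

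For the isometric extension, take $y=x^*$. This gives $\|\mathscr{R}_{_\cM}(x)\|_2^2=\nu(\mathscr{R}_{_\cM}(x)^*\mathscr{R}_{_\cM}(x))=\tau(x^*x)=\|x\|_2^2$ on the dense subspace $L_1\cap L_2$ of $L_2(\cM,\tau)$, so $\mathscr{R}_{_\cM}$ extends uniquely to an isometry. The bilinear identity then passes to all of $L_2$ by continuity of $(a,b)\mapsto\nu(ab)$ and $(x,y)\mapsto\tau(xy)$ on $L_2\times L_2$ (Cauchy–Schwarz).

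Part (2) is the same argument with $\mathscr{K}=\mathscr{K}_{\cR\bar\otimes\mathcal{L}(H)}\circ\theta$. Since $\theta$ is a trace-preserving $*$-homomorphism, $\theta(r\otimes x)$ has mean $0$ and second moment $(\tau\otimes{\rm Tr})(x^2)$. The second formula of Corollary \ref{second cumulant lemma} then applies verbatim.

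The only mildly delicate step is checking that $r\otimes x$, or $\theta(r\otimes x)$, lies in $L_1\cap L_2$ and that the traces factor as computed. Both follow from $\mu(r\otimes x)=\mu(x)$ (since $|r\otimes x|=1\otimes|x|$) and from the product structure of the trace.
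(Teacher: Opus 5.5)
Your proposal is correct and follows essentially the same route as the paper. Both apply Corollary~\ref{second cumulant lemma} to $r\otimes x$, where the Rademacher factor kills the mean term; both then polarize, use traciality, and reduce to self-adjoint elements via real and imaginary parts. The differences are minor: you polarize with $x\pm y$ rather than $x$, $y$, $x+y$, and you spell out two points the paper leaves implicit, namely the $\ast$-preservation of $\mathscr{R}_{_\cM}$ and the density/Cauchy--Schwarz argument for the $L_2$-extension.
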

\begin{proof} 
It suffices to prove (1). 
For any $x,y\in (L_1\cap L_2)(\cM,\tau)$, we have 
\begin{align*}
&~\quad  \nu_{_{\cN _{L_\infty (0,1) \bar{\otimes }\cM } } } \left(
\mathscr{R}_{_\cM}(x)\cdot\mathscr{R}_{_\cM} (y)\right)\\
&=\nu_{_{\cN _{L_\infty (0,1) \bar{\otimes }\cM } } } \left(
\mathscr{R}_{_\cM}(x_1+ix_2 )\cdot\mathscr{R}_{_\cM} (y_1+iy_2)\right)\\
&=\nu_{_{\cN _{L_\infty (0,1) \bar{\otimes }\cM } } } \Big(
\mathscr{R}_{_\cM}(x_1  )\cdot\mathscr{R}_{_\cM} (y_1 )+i
\mathscr{R}_{_\cM}(x_1  )\cdot\mathscr{R}_{_\cM} (y_2 )\\
&\qquad\qquad \qquad  \qquad \quad \quad  +i
\mathscr{R}_{_\cM}(x_2  )\cdot\mathscr{R}_{_\cM} (y_1 )-
\mathscr{R}_{_\cM}(x_2  )\cdot\mathscr{R}_{_\cM} (y_2 )
\Big)
\end{align*}
and 
\begin{align*}
\nu_{_{\cN _{L_\infty (0,1) \bar{\otimes }\cM } } } \left(xy \right)
&=\nu_{_{\cN _{ L_\infty (0,1)}  \bar{\otimes }\cM } } \left(
 \left(x_1+ix_2 \right)\cdot  \left(y_1+iy_2\right)\right)\\
&=\nu_{_{\cN _{L_\infty (0,1)} \bar{\otimes }\cM  } } \Big(
 x_1  \cdot  y_1 +i
 x_1  \cdot  y_2 +i x_2 
 \cdot   y_1 -
 x_2  \cdot   y_2 
\Big),
\end{align*}
where $x=x_1+ix_2$, $y=y_1+iy_2$, $x_i=x_i^*$ and $y_i=y_i^*$ for $i=1,2$. 
It follows from the above equalities that it suffices to prove the statement assuming, in addition, that the operators $x$ and $y$ are self-adjoint.

By Corollary \ref{second cumulant lemma}, we have
$$\nu_{_{\cN_{\cM \bar{\otimes}  L_\infty (0,1) }}}\left(
(\mathscr{K}_{_{L_\infty (0,1)\bar{\otimes }\cM } }z)^2
\right)=\left(
\int\otimes\tau \right) \left( z^2 \right)+\left(  \left (\int\otimes\tau \right )(z)\right)^2$$
for every $z=z^*\in(L_1\cap L_2)\left(
L_{\infty}(0,1)\bar{\otimes}\cM \right).$ 

If $x=x^*,y=y^*\in (L_1\cap L_2)(\cM,\tau ),$ then, setting $$z=r\otimes x, ~z=r\otimes y \mbox{ and } z=r\otimes (x+y)$$ respectively,  we obtain
$$\nu_{_{\cN_{\cM \bar{\otimes}  L_\infty (0,1) }}}\left ( (\mathscr{R}_{_\cM}x)^2\right)=\left(\int\otimes \tau\right)\left( \chi_{_{(0,1)}}\otimes x^2\right)=\tau(x^2), $$
$$ \nu_{_{\cN_{\cM \bar{\otimes}  L_\infty (0,1) }}}   \left( (\mathscr{R}_{_\cM}y)^2\right)=\left(\int\otimes \tau\right)\left(\chi_{_{(0,1)}}\otimes  y^2\right)=\tau(y^2)$$
and 
$$\nu_{_{\cN_{\cM \bar{\otimes}  L_\infty (0,1) }}}\left ((\mathscr{R}_{_\cM}(x+y))^2\right)=\left(\int\otimes \tau\right) \left ( \chi_{_{(0,1)}}\otimes(x+y)^2\right )=\tau((x+y)^2).$$
Subtracting the first two equalities from the last one, we write
$$ \nu_{_{\cN_{\cM \bar{\otimes}  L_\infty (0,1) }}} \left( \mathscr{R}_{_\cM}(x  ) \mathscr{R}_{_\cM}(y ) +\mathscr{R}_{_\cM}(y ) \mathscr{R}_{_\cM}( x ) \right )=   \tau  (xy+yx).$$
Using trace property, we obtain
$$\nu_{_{\cN_{\cM \bar{\otimes}  L_\infty (0,1) }}} \left (\mathscr{R}_{_\cM}x\cdot \mathscr{R}_{_\cM}y\right)=  \tau   (xy),\quad x,y\in (L_1\cap L_2)(\cM,\tau ).$$
This completes the proof. 
\end{proof}

The following lemma was known in the setting of symmetric function spaces having order continuous norm or the Fatou property
\cite[Proposition 11]{Braverman}.
 
\begin{lem}\label{braverman lemma} Let $E(0,1)$ be a strongly symmetric function space. If $y_1,y_2\in E(0,1)$ are real-valued, independent and mean zero, then
$$\left\|y_1+y_2\right\|_E\geq \max\left\{\left\|y_1\right\|_E,\left\|y_2\right\|_E\right\}.$$ 
\end{lem}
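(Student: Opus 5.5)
The idea is to write $y_1$ as a conditional expectation of $y_1+y_2$, and then use that conditional expectations are contractions on every strongly symmetric space. By symmetry it suffices to prove $\|y_1\|_E\le\|y_1+y_2\|_E$.

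\textbf{Step 1: conditional expectation.} Let $\mathcal{A}$ be the $\sigma$-algebra generated by $y_1$, and let $\mathbb{E}(\cdot|\mathcal{A})$ be the conditional expectation onto it. Note that $y_1,y_2\in E(0,1)\subset L_1(0,1)$. Since $y_2$ is independent of $\mathcal{A}$ and has mean zero, $\mathbb{E}(y_2|\mathcal{A})=\int_0^1 y_2=0$. Since $y_1$ is $\mathcal{A}$-measurable, $\mathbb{E}(y_1|\mathcal{A})=y_1$. Hence
$$y_1=\mathbb{E}(y_1+y_2|\mathcal{A}).$$

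\textbf{Step 2: submajorization.} Next I would show that $\mathbb{E}(z|\mathcal{A})\prec\prec z$ for every $z\in L_1(0,1)$. This is standard, and I would argue it by duality. For $t\in(0,1)$,
$$\int_0^t\mu(s;w)\,ds=\sup\Big\{\int_0^1 wg:\ \|g\|_\infty\le1,\ \|g\|_1\le t\Big\}.$$
Take $w=\mathbb{E}(z|\mathcal{A})$. The supremum may be restricted to $\mathcal{A}$-measurable $g$: replacing $g$ by $\mathbb{E}(g|\mathcal{A})$ leaves $\int wg$ unchanged and preserves both constraints. For such $g$ we have $\int wg=\int zg\le\int_0^t\mu(s;z)\,ds$. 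This gives
$$\int_0^t\mu(s;\mathbb{E}(z|\mathcal{A}))\,ds\le\int_0^t\mu(s;z)\,ds,\quad t\in(0,1).$$
For $t\ge1$ the inequality follows as well, since both functions vanish beyond $1$.

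\textbf{Step 3: conclusion.} Apply Step 2 with $z=y_1+y_2\in E(0,1)$. Then $y_1\prec\prec y_1+y_2$, and $y_1\in E$ by hypothesis. Since $E(0,1)$ is strongly symmetric, this yields $\|y_1\|_E\le\|y_1+y_2\|_E$. Exchanging the roles of $y_1$ and $y_2$ gives the claim.

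The only step that needs care is the submajorization in Step 2. The point is that strong symmetry is used directly, so we do not need order continuity or the Fatou property. In particular, we avoid any interpolation or approximation argument of the kind used in \cite{Braverman}, where those assumptions were needed to pass from $L_1$ and $L_\infty$ to $E$. Provided the duality formula for $\int_0^t\mu$ is accepted, which it is as a standard fact for functions on $(0,1)$, the argument is complete.
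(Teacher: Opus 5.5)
Your proof is correct and is essentially the paper's argument: both write $y_1$ as a conditional expectation of $y_1+y_2$ (using that $y_2$ is independent and mean zero), use that conditional expectations are contractive for Hardy--Littlewood--P\'{o}lya submajorisation, and conclude by strong symmetry. The only difference is cosmetic. The paper first replaces $y_1+y_2$ by the equimeasurable function $y_1\otimes 1+1\otimes y_2$ on $(0,1)^2$ and applies ${\rm id}\otimes\int_0^1$, taking the submajorisation as standard, whereas you condition on $\sigma(y_1)$ directly on $(0,1)$ and supply a duality proof of $\mathbb{E}(z|\mathcal{A})\prec\prec z$.
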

\begin{proof} Consider the function $y_1\otimes 1+1\otimes y_2$ on $(0,1)^2.$  Since $y_2$ is mean zero, it follows that
$$y_1\otimes 1=\left({\rm id}\otimes\int_0^1\right)(y_1\otimes 1+1\otimes y_2).$$
Hence,
$$y_1\otimes 1\prec\prec y_1\otimes 1+1\otimes y_2.$$
Since $y_1$ and $y_2$ are independent, it follows that $y_1+y_2$ is equimeasurable with $y_1\otimes 1+1\otimes y_2.$ Clearly, $y_1$ is equimeasurable with $y_1\otimes 1.$ Thus, $y_1\prec\prec y_1+y_2.$ Since the norm is monotone with respect to the Hardy--Littlewood--P\'{o}lya submajorisation~\eqref{HLP}, it follows that $$\left\|y_1\right\|_E \leq \left\|y_1+y_2\right\|_E.$$ The same argument shows that $\left\|y_2\right\|_E\leq \left\|y_1+y_2\right\|_E.$
This completes the proof.
\end{proof}
 
 Let $x \in S(I)$, $I=(0,\infty)$ or $(0,1)$. 
 If $t>0,$ the dilation operator $\sigma_{t}$ is defined by setting 
 $(\sigma_{t})x(s)=x\left(\frac{s}{t}\right),$ $s>0,$ in the case of the semi-axis\cite{LT2,Bennett_S,KPS}. In the case of the interval $(0,1),$ the operator $\sigma_{t }$ is defined by
 $$
 \sigma_{ t }x(s)=
 \begin{cases}
x\left(\frac{s}{t}\right),& s\leq\min\{1, t \};\\
  0,&  t <s\leq1.
 \end{cases}
  $$

Let $$\psi(t): =\int_0^t \mu\left( s;K  \chi_{(0,1)}\right)ds  ,~t\in (0,1).$$
Recall that  $Kf$ and $K^{AS}f$  are equimeasurable for every $f\in L_1(0,1),$ see Section~\ref{S:k}.
We have   $K  :L_\infty (0,1)\to M_\psi $ and $\norm{K  }_{L_\infty \to M_\psi}=1$ (see \cite[p.265]{ASZ}, which yields the following lemma in the setting of $f\in L_\infty(0,1)$). 
Here, $M_\psi$ stands for the Marcinkiewicz space consisting of all elements $x\in S(0,1)$
  such that \cite{KPS}
$$\norm{x}_{M_\psi}:= \sup_{0<t\le 1} \frac{ \int_0^t \mu(s;x)ds}{ \psi(t)}<\infty. $$

\begin{lem}\label{mpsi estimate} For every symmetrically distributed $f\in L_{\infty}(0,\infty)$  supported on $(0, m)$  for some finite positive number $m$ we have
$$\left\|Kf\right\|_{M_{\psi}}\leq c_{{\rm abs}}\left\|f\right\|_{L_2\cap L_{\infty}},$$
where $K:L_1(0,\infty)\to L_1(0,1)$ is the integration operator with respect to the Poisson process defined in Section \ref{S:k}.
\end{lem}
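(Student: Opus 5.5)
The plan is to compare the distribution of $Kf$ with that of $K\chi_{(0,1)}$, which is a Poisson random variable with parameter $1$, by way of exponential moments. By homogeneity we may assume $\|f\|_{L_2\cap L_\infty}=1$, so that $|f|\le 1$ and $\int_0^\infty f^2\le 1$. Since $f$ is symmetrically distributed and $f\in L_1(0,\infty)$ (it is bounded with support of finite measure), formula \eqref{char for comm K} can be continued analytically to real exponents. This gives, for every real $\lambda$,
$$\int_0^1 e^{\lambda (Kf)(s)}\,ds=\exp\Big(\int_0^\infty(\cosh(\lambda f(s))-1)\,ds\Big).$$
The function $u\mapsto(\cosh(\lambda u)-1)/u^2$ is even and increasing in $|u|$. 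Together with $|f|\le 1$, this yields $\cosh(\lambda f)-1\le f^2(\cosh\lambda-1)$. Hence
$$\int_0^1 e^{\lambda Kf}\le \exp(\cosh\lambda-1),\qquad \lambda\in\mathbb{R},$$
and the same bound holds for $-Kf$. The right-hand side is exactly the exponential moment of a difference of two independent Poisson$(1/2)$ variables. This is a bound uniform in $f$, and it is essentially the bound satisfied by $K\chi_{(0,1)}$.

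Next I would convert this into a tail bound. By Chebyshev's inequality with $\lambda=\log u$, for $u\ge e$,
$$m\{|Kf|>u\}\le 2\exp\big(-u\log u+\tfrac{u}{2}+\tfrac{1}{2u}\big)\le 2e^{-u\log u+u}.$$
For a Poisson$(1)$ variable $N$ we have $P(N\ge k)\ge 1/(e\,k!)$, so both $\mu(t;Kf)$ and $\mu(t;K\chi_{(0,1)})$ are of order $\log(1/t)/\log\log(1/t)$ as $t\to0$. Concretely, I aim to show that there are an absolute constant $C$ and a threshold $t_0$ such that
$$\mu(t;Kf)\le C\,\mu(t;K\chi_{(0,1)})+C\quad\text{for } t<t_0 .$$
Integrating then gives $\int_0^t\mu(s;Kf)\,ds\le C\psi(t)+Ct\le 2C\psi(t)$. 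Here $\psi(t)\ge t$ holds because $\psi$ is concave, $\psi(0)=0$ and $\psi(1)=\int_0^1 K\chi_{(0,1)}=1$.

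For the remaining range $t\ge t_0$, I use the second moment. By Lemma \ref{second cumulant commutative lemma} and symmetry (so $\int f=0$), $\int_0^1(Kf)^2=\int f^2\le 1$. Cauchy--Schwarz then gives $\int_0^t\mu(s;Kf)\,ds\le\sqrt t\le t_0^{-1/2}\,t\le t_0^{-1/2}\psi(t)$. Taking the supremum over $t\in(0,1]$ yields $\|Kf\|_{M_\psi}\le c_{\rm abs}$.

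The main obstacle is the middle step. We must turn the Chernoff tail into a comparison of decreasing rearrangements with the Poisson rearrangement, keeping every constant absolute. This requires a two-sided Stirling-type estimate. On one side, $-\log P(N\ge k)\sim k\log k$. On the other, the Chernoff exponent is $u\log u-u$. Inverting both, the quantile levels agree up to a multiplicative constant. A careless inversion could produce a constant depending on $t_0$, or a loss of a $\log\log$ factor, so I would verify the inversion explicitly. The justification of the analytic continuation to real $\lambda$ is routine, since $Kf$ is a finite compound Poisson sum with bounded jumps.
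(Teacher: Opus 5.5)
Your argument is correct, and it takes a genuinely different route from the paper. The paper never examines the law of $Kf$ directly. It approximates $Kf$ in distribution by $H_nf=\sum_{k=0}^{n-1}1^{\otimes k}\otimes\sigma_{1/n}f\otimes 1^{\otimes\infty}$, which is a Poisson limit theorem checked on characteristic functions. It then passes to the limit via the Fatou property of $M_\psi$. Finally it bounds $\|H_nf\|_{M_\psi}$ using two facts: the Rosenthal-type inequality $\|\sum_k x_k\|_{M_\psi}\le c_{\rm abs}\|\bigoplus_k x_k\|_{L_2\cap L_\infty}$ for independent symmetric summands, quoted from \cite[Proposition 18]{ASZ}, and the identity $\|\bigoplus_{k<n}\sigma_{1/n}f\|_{L_2\cap L_\infty}=\|f\|_{L_2\cap L_\infty}$.

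You instead work with the Laplace transform of $Kf$. The inequality $\cosh(\lambda f)-1\le f^2(\cosh\lambda-1)$ uses the $L_\infty$ part of the norm pointwise and the $L_2$ part after integration. So every normalized $Kf$ is exponentially dominated by one fixed law. A Chernoff bound plus quantile inversion then gives $\mu(t;Kf)\le C\,(1+\mu(t;K\chi_{(0,1)}))$. Your route is self-contained: it needs no black-box inequality in $M_\psi$ and no lower semicontinuity under convergence in distribution. It also gives explicit constants and proves more than the lemma asks, namely domination of the decreasing rearrangements themselves rather than of their primitives. The paper's route is shorter given the citation, and it fits the paper's general scheme of transferring independent-sum inequalities through the Poisson approximation.

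The step you flag as the main obstacle is harmless. Set $A=\log(2/t)+e^2$ and $u=4A/\log A$. Then $u\ge e$ and $u\log u-u\ge 4(1-e^{-1})A\ge A$, so your tail bound gives $\mu(t;Kf)\le 4A/\log A$ for every $t\in(0,1)$. On the other side, $m(\{K\chi_{(0,1)}\ge k\})\ge 1/(e\,k!)\ge e^{-1-k\log k}$ yields $\mu(t;K\chi_{(0,1)})\ge\lfloor L/(2\log L)\rfloor$ whenever $L=\log(1/t)>2$. Combining the two bounds gives $\mu(t;Kf)\le C(1+\mu(t;K\chi_{(0,1)}))$ on all of $(0,1)$ with an absolute $C$. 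Since $\psi(t)\ge t$ absorbs the additive constant, your separate second-moment argument for $t\ge t_0$ is not needed.
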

\begin{proof} Set
$$H_nf=\sum_{k=0}^{n-1}1^{\otimes k}\otimes \sigma_{\frac1n}f\otimes 1^{\otimes\infty},\quad n\geq m.$$	
We claim that the sequence $\{H_nf\}_{n\geq m}$ converges in distribution to $Kf$ as $n\to \infty.$ Indeed, 
\begin{align*}
 \int_{\Omega}e^{it(H_nf)(s)}ds 
&=\prod_{k=0}^{n-1}\int_{(0,1)}e^{it \left(\sigma_{\frac1n}f\right)(s)}ds\\
&=\left(\int_0^{\frac{m}{n}}e^{itf(ns)}ds+\int_{\frac{m}{n}}^1 1ds \right)^n\\
&=\left(\frac1n\int_0^m e^{itf(s)} ds +1-\frac{m}{n}\right)^n\\
&=\left(1 + \frac1n\int_0^m e^{itf(s)} ds -\frac{m}{n} +\frac1n\int_m^\infty \left( e^{it \cdot 0}-1\right)  ds \right)^n\\
&=\left(1+\frac1n\int_0^{\infty}(e^{itf(s)}-1)ds\right)^n,
\end{align*}
where $\Omega =\Pi_{k=0}^\infty (0,1)$ be the (infinite dimensional) hypercube equipped with the product Lebesgue measure 
$dm^\infty $. 
For every fixed $t\in\mathbb{R},$ we have
$$\left(1+\frac1n\int_0^{\infty}\left(e^{itf(s)}-1\right)ds \right)^n\to \exp\left(\int_0^{\infty}\left(e^{itf(s)}-1\right)ds\right).$$
Thus,
$$\int_{\Omega}e^{it(H_nf)(s)}ds \to \exp\left(\int_0^{\infty}\left(e^{itf(s)}-1\right)ds\right)
\stackrel{\eqref{char for comm K}}{=} \int_0^1e^{it(Kf)(s)}ds .$$
Since convergence of characteristic functions yields the convergence in distributions~\cite[Theorem 6.2.1]{Borovkov}, the claim follows.

Since the space $M_{\psi}$ possesses the Fatou property\footnote{
It is known that $M_{\psi}$ 
is maximal in the terminology of \cite[p.104]{KPS}.
The Fatou property is 
  equivalent with maximality, see \cite[p.118]{LT2}.}, it follows that
$$\left\|Kf\right\|_{M_{\psi}}\leq\liminf_{n\to\infty}\left\|\sum_{k=0}^{n-1}1^{\otimes k}\otimes \sigma_{\frac1n}f\otimes 1^{\otimes\infty}\right\|_{M_{\psi}},$$
see e.g. \cite[Appendix C]{Astashkinbook}.
By  \cite[Proposition 18]{ASZ}, we have
\begin{align*}
  \left\|
\sum_{k=0}^{n-1}1^{\otimes k}\otimes \sigma_{\frac1n}f\otimes 1^{\otimes\infty}\right\|_{M_{\psi}}&\leq c_{{\rm abs}}\left\|\bigoplus_{k=0}^{n-1}1^{\otimes k}\otimes \sigma_{\frac1n}f\otimes 1^{\otimes\infty}\right\|_{L_2\cap L_{\infty}}\\
&=c_{{\rm abs}}\left\|\bigoplus_{k=0}^{n-1}\sigma_{\frac1n}f\right\|_{L_2\cap L_{\infty}}=c_{{\rm abs}}\left\|f\right\|_{L_2\cap L_{\infty}}.
\end{align*}
This completes the proof. 
\end{proof}

The following lemma is one of the key instruments to construct an isomorphism from a symmetric space over the $II_\infty$-factor to some symmetric space over the $II_1$-factor.  

\begin{lem}\label{simple function estimate} Let $E(0,1)$ be a strongly symmetric function space. If 
 the operator $K$ is bounded from $E(0,1)$ to $E(0,1)$ (equivalently, $K^{AS}$ is bounded from $E(0,1)$ to $E(\Omega)$), then there exists a constant 
 $c_E$ depending on $E$ only such that 
$$c_E^{-1}\left\|f\right\|_{Z_E^2}\leq \left\|Kf\right\|_E\leq c_E\left\|f\right\|_{Z_E^2}$$
for every symmetrically distributed $f\in Z_E^2(0,\infty)$ supported on a bounded interval $(0,m)$ for some $m<\infty $.
\end{lem}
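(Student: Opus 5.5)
Split $f=f_1+f_2$, where $f_1$ carries the values of $f$ on the set where $|f|>\mu(1;f)$ (measure at most $1$), and $f_2=f-f_1$. Since $f$ is symmetrically distributed, we can arrange both $f_1$ and $f_2$ to be symmetrically distributed and disjointly supported. Concretely, split the level set $\{|f|=\mu(1;f)\}$ symmetrically, so that $\mu(f_1)=\mu(f)\chi_{(0,1)}$ and $\mu(f_2)=\mu(f)\chi_{(1,\infty)}$, modulo ties at the level $\mu(1;f)$, which only affect constants. Then $\|f\|_{Z_E^2}=\|\mu(f_1)\|_E+\|f_2\|_2$. Because $f_1$ and $f_2$ have disjoint supports, $Kf_1$ and $Kf_2$ are independent, as increments of the Poisson process over disjoint sets. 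Both are mean zero, since $\int_0^1 Kg=\int_0^\infty g=0$ for symmetric $g$. Also $\|f_2\|_\infty\le \mu(1;f)\le \|\mu(f)\chi_{(0,1)}\|_E$, using \eqref{normalized}.

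\emph{Upper bound.} By boundedness of $K$ on $E$, and since $Kf_1$ depends only on the distribution of $f_1$ (Lemma~\ref{k vs kas lemma} together with \eqref{char for comm K}), we get $\|Kf_1\|_E=\|K\mu(f_1)\text{-type}\|_E\le \|K\|_{E\to E}\|\mu(f)\chi_{(0,1)}\|_E$. Here we regard $f_1$ as an element of $E(0,1)$ via an equimeasurable rearrangement. For $f_2$, Lemma~\ref{mpsi estimate} applies ($f_2$ is bounded, symmetric, supported on $(0,m)$) and gives
$$\|Kf_2\|_{M_\psi}\le c_{\rm abs}\max\{\|f_2\|_2,\|f_2\|_\infty\}\le c_{\rm abs}\|f\|_{Z_E^2}.$$
It remains to know $M_\psi\subset E$ continuously. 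This is the point where the Kruglov property is used: $K:L_\infty\to E$ bounded gives $K\chi_{(0,1)}\in E$, and since $M_\psi$ is the smallest symmetric space containing $\mu(K\chi_{(0,1)})$ in the sense of submajorization (every $x\in M_\psi$ satisfies $x\prec\prec \|x\|_{M_\psi}\,\mu(K\chi_{(0,1)})$), strong symmetry of $E$ yields $\|x\|_E\le \|x\|_{M_\psi}\|K\chi_{(0,1)}\|_E$. Summing the two pieces gives the upper bound.

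\emph{Lower bound.} By Lemma~\ref{braverman lemma}, $\|Kf\|_E\ge \max\{\|Kf_1\|_E,\|Kf_2\|_E\}$. For $Kf_2$, $\|Kf_2\|_E\ge\|Kf_2\|_{L_1}\ge c\|Kf_2\|_{L_2}$, and $\|Kf_2\|_2=\|f_2\|_2$ by Lemma~\ref{second cumulant commutative lemma}. The comparison of $L_1$ and $L_2$ norms should follow from a Khintchine/Rosenthal-type reverse inequality for $Kf_2$: $Kf_2$ is a limit of sums of i.i.d.\ symmetric bounded variables (as in the proof of Lemma~\ref{mpsi estimate}), with $\|f_2\|_\infty\le\|f_2\|_2$-scale control. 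Alternatively, use the $M_\psi$ bound above together with $L_2$ norm equal to $\|f_2\|_2$ and an interpolation (Hölder between $L_1$ and $L_4$, computing the fourth moment via cumulants: $\int (Kf_2)^4=\int f_2^4+3(\int f_2^2)^2\le 4\|f_2\|_2^4$ when $\|f_2\|_\infty\le\|f_2\|_2$). I expect this case split to be the main obstacle: if $\|f_2\|_\infty\gg\|f_2\|_2$ then $\|f\|_{Z_E^2}\approx\|\mu(f)\chi_{(0,1)}\|_E$ anyway, so only the regime $\|f_2\|_2\ge\mu(1;f)$ needs the fourth-moment argument.

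For $Kf_1$, we need $\|Kf_1\|_E\gtrsim\|f_1\|_E$. The plan is to use that on the event $A_1$ (exactly one Poisson jump in $\operatorname{supp} f_1$, which has probability $\ge e^{-1}m(\operatorname{supp} f_1)$), $Kf_1$ takes values distributed like $f_1$. More cleanly, we use the conditional-expectation trick of Lemma~\ref{braverman lemma} with the representation $K^{AS}$: projecting $K^{AS}f_1$ onto the $A_1$-term $\chi_{A_1}\otimes f_1$ is a contraction in the majorization sense for symmetric mean-zero $f_1$. This gives $\mu(f_1)\chi_{(0,1)}$ dilated by $1/e$, hence $\|Kf_1\|_E\ge \|\sigma_{1/e}f_1\|_E\ge e^{-1}\|f_1\|_E$. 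Combining the bounds gives $c_E$.
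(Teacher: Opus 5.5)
Your proof is correct and shares the paper's architecture. You use the same symmetric splitting of $f$ into a piece $f_1$ equimeasurable with $\mu(f)\chi_{(0,1)}$ and an $L_2$-tail $f_2$; the split can be made exact, so there is no loss ``modulo ties''. The upper bound is the same: $\|K\|_{E\to E}$ for $f_1$, and Lemma~\ref{mpsi estimate} plus $M_\psi\hookrightarrow E$ for $f_2$. So is the lower bound for $f_1$, via Lemma~\ref{braverman lemma} and restricting $K^{AS}f_1$ to $A_1$. That restriction is a pointwise contraction, so your mean-zero/conditional-expectation remark is unnecessary.

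You deviate in two places. For $M_\psi\hookrightarrow E$ the paper cites \cite[Lemma 12]{ASZ}, whereas you observe directly that $x\prec\prec\|x\|_{M_\psi}\mu(K\chi_{(0,1)})$, which gives $\|x\|_E\le\|K\|_{E\to E}\|x\|_{M_\psi}$. This is fine, since you only need it for $x=Kf_2$. That function lies in $E$ a priori: it is a finite sum of the $K(f_2\chi_{[j,j+1)})$, each equimeasurable with $K$ of a bounded function on $(0,1)$, so strong symmetry applies.

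The more substantial difference is the tail. The paper applies \cite[Lemma 38]{JSZ}, $\|Kf\|_1\ge c'_{\rm abs}\|f\|_{L_1+L_2}$, to $Kf$ itself and then bounds $\|f\|_{L_1+L_2}$ below by $\|\mu(f)\chi_{(1,\infty)}\|_2$, with no case distinction. You instead isolate $Kf_2$ by Lemma~\ref{braverman lemma} and prove the reverse $L_1$--$L_2$ inequality by hand. You use $\|Kf_2\|_2=\|f_2\|_2$ (Lemma~\ref{second cumulant commutative lemma}), the cumulant identity $\int(Kf_2)^4=\int f_2^4+3(\int f_2^2)^2$, and H\"older's $\|X\|_1\ge\|X\|_2^3/\|X\|_4^2$ when $\|f_2\|_\infty\le\|f_2\|_2$. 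The opposite case gives $\|f_2\|_2<\mu(1;f)\le\|\mu(f)\chi_{(0,1)}\|_E$ and is absorbed by the $f_1$ bound.

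Your route is self-contained, needing only \eqref{char for comm K}, and gives explicit constants, at the cost of a case split. The paper's route is shorter given the citation and rests on the stronger, case-free estimate $\|Kf\|_1\gtrsim\|f\|_{L_1+L_2}$ for all symmetric $f$.
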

\begin{proof} Choose $g$ such that 
\begin{enumerate}
\item $g$ is equidistributed with $f;$
\item $g\chi_{_{[0, 1)}}$ and $g\chi_{_{[1, \infty )}}$ are  symmetrically distributed;
\item $|g|\chi_{_{[0, 1)}}$ and $|g|\chi_{_{[1, \infty )}}$  are  equidistributed with $\mu(f)\chi_{_{[0, 1)}}$ and $\mu(f)\chi_{_{[1, \infty )}}$. 
\end{enumerate}
Set $g_1=g\chi_{(0,1)}$ and $g_2=g\chi_{_{[1,\infty)}}.$ Clearly, $Kf$ is equidistributed with $Kg=Kg_1+Kg_2$ (see Lemma \ref{equimeasurability lemma} and \cite[Corollary 33(i)]{JSZ}), where $Kg_1$ and $Kg_2$ are mutually independent. 

By the  triangle inequality, we have 
\begin{align*}
\left\|Kf\right\|_E&=\left\|Kg \right\|_E \leq \left\|Kg_1\right\|_E+ \left\|Kg_2\right\|_E\leq \left\|Kg_1\right\|_E+\left\|{\rm id}\right\|_{M_{\psi}\to E}\left\|Kg_2\right\|_{M_{\psi}},
\end{align*}
where $\left\|{\rm id}\right\|_{M_{\psi}\to E}$ is finite\footnote{Recall that $K$ is bounded from $E(0,1)$ into $E(\Omega)$, which implies that $K$ maps $L_\infty (0,1)$ into $E(\Omega)$. Therefore, by \cite[Lemma 12]{ASZ}, $\left\|{\rm id}\right\|_{M_{\psi}\to E}$ is finite. }. 
By Lemma \ref{mpsi estimate}, we have
\begin{eqnarray*}
& &   \left\|Kf\right\|_E\\&\leq&\left\|K\right\|_{E\to E}\left\|g_1\right\|_E+c_{{\rm abs}}\left\|{\rm id}\right\|_{M_{\psi}\to E}\left\|g_2\right\|_{L_2\cap L_{\infty}}\\
&=&\left\|K\right\|_{E\to E} \left\|\mu(f)\chi_{(0,1)}\right\|_E+c_{{\rm abs}}\left\|{\rm id}\right\|_{M_{\psi}\to E}\max\left\{\mu(1;f),\left\|\mu(f)\chi_{_{(1,\infty)}}\right\|_2\right\}\\
&\stackrel{\eqref{defZE2}, \eqref{normalized}}{\le}& \left\|K\right\|_{E\to E} \left\|f\right\|_{Z_E^2}+ c_{{\rm abs}}\left\|{\rm id}\right\|_{M_{\psi}\to E} 
\left\|f\right\|_{Z_E^2}
\\
&=&  \left(
\left\|K\right\|_{E\to E}+ c_{{\rm abs}}\|{\rm id}\|_{M_{\psi}\to E}\right)\left\|f\right\|_{Z_E^2}.
\end{eqnarray*}

To prove the converse inequality, note that
$Kg_1$ and $Kg_2$ are mutually independent and mean zero.
%
Since $E(0,1)$ is strongly symmetric, it follows from Lemma~\ref{braverman lemma}  that
$$\left\|Kf\right\|_E=\left\|Kg\right\|_E\geq\left\|Kg_1\right\|_E.$$
Since $Kg_1$ is equimeasurable with $K^{AS}g_1$ (see Section \ref{S:k}), it follows from the definition of the operator $K^{AS}$  that
$$\mu\left(Kg_1\right)=\mu\left(K^{AS}g_1\right)\geq\sigma_{e^{-1}}\mu(g_1)\geq\sigma_{\frac13}\mu(g_1).$$
Thus,
$$\left\|Kf\right\|_E\geq \left\|\sigma_{\frac13}\mu(g_1)\right\|_E\geq\frac13
\left\|g_1\right\|_E=\frac13\left\|\mu(f)\chi_{(0,1)}\right\|_E.$$
On the other hand, we have $E(0,1)\subset L_1(0,1)$ (see e.g. \cite{KPS}, \cite{DP2} and \cite[Theorem 4.4.6]{DPS}) and 
$$\left\|Kf\right\|_E\geq \left\|{\rm id}\right\|_{E\to L_1}^{-1}\left\|Kf\right\|_1.$$
It follows from   Lemma 38 in \cite{JSZ} that
$$\left\|Kf\right\|_1\geq c_{{\rm abs}}'\left\|f\right\|_{L_1+L_2}$$
for every symmetrically distributed $f\in L_1(0,\infty)$ supported on a bounded interval. Thus,
\begin{align*}\left\|Kf\right\|_E&~\qquad \geq\qquad \max\left\{
\frac13\left\|\mu(f)\chi_{(0,1)}\right\|_E,\left\|{\rm id}\right\|_{E\to L_1}^{-1}c_{{\rm abs}}'\left\|\mu(f)\right\|_{L_1+L_2}\right\}\\
&
\stackrel{\mbox{\tiny \cite[Example 1]{Mali}}}{\gtrsim}\max\left\{
\frac13\left\|\mu(f)\chi_{(0,1)}\right\|_E,\left\|{\rm id}\right\|_{E\to L_1}^{-1}c_{{\rm abs}}'\left\|\mu(f)\chi_{_{(1,\infty )}}\right\|_{ L_2}\right\}\\
& ~\qquad \gtrsim \qquad   \norm{f}_{Z_E^2}.
\end{align*}
The proof is complete. 
\end{proof}

The following lemma was proved in the special setting of $L_p$-spaces in \cite[Lemma 38]{JSZ} and \cite[Proposition A.4]{HJSZ}.
Recall the definitions of  $\mathscr{R}_{_\cM}:L_1(\cM,\tau)\to L_1\left(
\cN_{_{L_\infty (0,1) } \bar{\otimes } \cM  }
\right)$  and $\mathscr{R}:L_1(\mathcal{R}\bar{\otimes}\mathcal{L}(H))\to L_1(\cR)$ (see \eqref{defRM} and \eqref{defR}).

\begin{lem}\label{from js lemma} Let $\cM$ be a semifinite von Neumann algebra equipped with a semifinite faithful normal trace $\tau$. 
Let $E(0,1)$ be a strongly 
symmetric function space. 
If $K$ is bounded from $E(0,1)$ to $E(0,1)$, then
\begin{enumerate}
\item
$$\left\| \mathscr{R}_{_{\cM }} x \right\|_E\approx_E \left\|x\right\|_{Z_E^2}$$
for every  $x\in Z_E^2(\cM,\tau)$ with $\tau$-finite supports. 
Moreover,   
 $\mathscr{R}_{_{\cM }}   $ extends to an isomorphic embedding from $Z_E^2(\cM,\tau )$ into $E\left(
\cN_{_{L_\infty (0,1)} \bar{\otimes}\cM }\right).$
\item $$\|\mathscr{R}x\|_E\approx_E\|x\|_{Z_E^2}$$
for every  finitely supported $x\in Z_E^2(\mathcal{R}\bar{\otimes}\mathcal{L}(H)).$ Moreover,   $\mathscr{R}$ extends to an isomorphic embedding from $Z_E^2(\mathcal{R}\bar{\otimes}\mathcal{L}(H))$ into $E(\mathcal{R}).$
\end{enumerate}

\end{lem}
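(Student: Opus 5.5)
The idea is to reduce everything to the commutative estimate of Lemma~\ref{simple function estimate} via equimeasurability, and then pass to all of $Z_E^2$ by density. It suffices to treat (1); (2) is identical, with $\mathscr{K}$ replaced by $\mathscr{K}\circ\theta$ and $\theta$ trace preserving.

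Fix $x\in Z_E^2(\cM,\tau)$ with $\tau$-finite support, and assume first that $x=x^*$ is bounded. Then $z=r\otimes x$ is a self-adjoint element of $L_1(L_\infty(0,1)\bar{\otimes}\cM)$. It is symmetrically distributed, because $r$ is, and it is supported on a set of finite trace $m=\tau(s(x))$. Moreover $\mu(z)=\mu(x)$. Choose a real symmetrically distributed $f\in L_\infty(0,\infty)$, supported on $(0,m)$, that is equimeasurable with $z$. Lemma~\ref{equimeasurability lemma} shows that $\mathscr{R}_{_\cM}x=\mathscr{K}(z)$ is equimeasurable with $Kf$. Hence $\|\mathscr{R}_{_\cM}x\|_E=\|Kf\|_E$, and Lemma~\ref{simple function estimate} gives
$$\|\mathscr{R}_{_\cM}x\|_E\approx_E\|f\|_{Z_E^2}=\|x\|_{Z_E^2}.$$

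To remove boundedness, I truncate: the spectral truncations $x\chi_{[-n,n]}(x)$ converge to $x$. One then has to check that the image $\mathscr{R}_{_\cM}(x\chi_{[-n,n]}(x))$ converges in $L_1$ and that the norms pass to the limit. Alternatively, Lemma~\ref{simple function estimate} may extend directly to unbounded $f$ with bounded support.

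For a non-self-adjoint $x=x_1+ix_2$, the upper bound follows from the triangle inequality together with $\|x_j\|_{Z_E^2}\lesssim\|x\|_{Z_E^2}$. The lower bound is more delicate. I would try the $2\times 2$ matrix trick: pass to $\cM\otimes M_2$ and the hermitian element $\begin{pmatrix}0&x\\ x^*&0\end{pmatrix}$, whose singular values are those of $x$ doubled. This requires compatibility of the Kruglov construction with amplification, which I am not sure of. A fallback is to derive the lower bound by duality. Lemma~\ref{hilbertian} gives $\nu(\mathscr{R}x\cdot\mathscr{R}y)=\tau(xy)$. Combined with the upper bound on $E^\times$, this would be used to control $\|x\|_{Z_E^2}$ by $\sup\{|\tau(xy)|:\ \|y\|_{(Z_E^2)^{\times}}\le1\}$. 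However, duality needs hypotheses on $E^\times$ that are not available here, so the matrix trick is preferable.

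Finally, elements with $\tau$-finite support are dense in $Z_E^2(\cM,\tau)$, at least when $E$ is separable; in the Fatou case one works with the closure. The two-sided estimate shows that $\mathscr{R}_{_\cM}$ is Cauchy-preserving, so it extends uniquely to an isomorphic embedding $Z_E^2(\cM,\tau)\to E(\cN_{L_\infty(0,1)\bar{\otimes}\cM})$. This extension is consistent with the $L_1$-definition, because convergence in $E\subset L_1$ implies $L_1$-convergence.

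I expect the main obstacle to be the lower bound for non-self-adjoint $x$, together with the density question when $E$ is not separable.
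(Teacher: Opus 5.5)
Your self-adjoint argument is the paper's proof. Since $r\otimes x$ is symmetrically distributed with $\mu(r\otimes x)=\mu(x)$ and support of trace $\tau(s(x))$, $\mathscr{R}_{_\cM}x=\mathscr{K}(r\otimes x)$ is equimeasurable with $Kf$, and Lemma~\ref{simple function estimate} finishes. You do not need to assume $x$ bounded, nor to truncate. Lemma~\ref{simple function estimate} already covers any symmetrically distributed $f\in Z_E^2(0,\infty)$ with bounded support; boundedness enters only through the tail $g_2$, which is automatically bounded by $\mu(1;f)$. Also, $x\in L_1$ holds because $\mu(x)\chi_{(0,1)}\in E(0,1)\subset L_1(0,1)$ and the rest of $\mu(x)$ is bounded with bounded support.

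The genuine gap is the lower bound for non-self-adjoint $x$, which you leave open. The amplification route you prefer does not work with the available tools. $\mathscr{R}_{_{\cM\otimes M_2}}$ takes values in a different algebra. Its value at $\begin{pmatrix}0&x\\ x^*&0\end{pmatrix}$ is in distribution a compound Poisson variable with doubled intensity, and nothing relates it to $\mathscr{R}_{_\cM}x$. Comparing the two norms is the original problem again.

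The missing idea is the elementary ``standard argument'' the paper invokes: $\mathscr{R}_{_\cM}$ commutes with the adjoint. This holds because $\mathscr{K}$ is complex linear and positive, hence hermitian, and $r$ is real-valued. Hence $\mathscr{R}_{_\cM}(\operatorname{Re}x)=\operatorname{Re}(\mathscr{R}_{_\cM}x)$ and $\mathscr{R}_{_\cM}(\operatorname{Im}x)=\operatorname{Im}(\mathscr{R}_{_\cM}x)$. Since $\mu(y^*)=\mu(y)$, we have $\|\operatorname{Re}y\|_E,\|\operatorname{Im}y\|_E\le\|y\|_E$. The parts $\operatorname{Re}x$ and $\operatorname{Im}x$ still have $\tau$-finite support, since their supports lie under the join of the left and right supports of $x$. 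Applying the self-adjoint case to them gives
\[
\|x\|_{Z_E^2}\lesssim\|\operatorname{Re}x\|_{Z_E^2}+\|\operatorname{Im}x\|_{Z_E^2}\lesssim_E\|\operatorname{Re}(\mathscr{R}_{_\cM}x)\|_E+\|\operatorname{Im}(\mathscr{R}_{_\cM}x)\|_E\le 2\|\mathscr{R}_{_\cM}x\|_E .
\]
No duality and no hypothesis on $E^\times$ is needed.

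Your hedge on density is also unnecessary: $\tau$-finitely supported elements are dense in $Z_E^2(\cM,\tau)$ for every $E$, separable or not.
\begin{enumerate}
\item Any $x\in Z_E^2$ lies in $S_0(\cM,\tau)$, because $\mu(x)\chi_{(1,\infty)}\in L_2$.
\item The cut-off $x\,e^{|x|}(\epsilon,\infty)$ has $\tau$-finite support.
\item The remainder $x\,e^{|x|}[0,\epsilon]$ has singular value function $\mu(s+d_{|x|}(\epsilon);x)\le\epsilon$. So its $Z_E^2$-norm is at most $\epsilon+\|\mu(x)\chi_{(1+d_{|x|}(\epsilon),\infty)}\|_2$, which tends to $0$ unless $x$ already has $\tau$-finite support.
\end{enumerate}
Only small values are removed, so no order continuity of $E$ is used. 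The extension is therefore to all of $Z_E^2(\cM,\tau)$, not merely to a closure.
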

\begin{proof} 
It suffices to prove (1). By a standard argument, one may assume that $x$ is self-adjoint (see e.g. the proofs of \cite[Lemma 38]{JSZ} or \cite[Proposition A.4]{HJSZ}). Let $f$ be a function on $(0,\infty)$ equidistributed with $r\otimes x.$ It follows   that $\mathscr{R}(x)=\mathscr{K}(r\otimes x)$ is equidistributed with $Kf$ (see Section \ref{S:k}). The assertion follows from Lemma~\ref{simple function estimate}.
\end{proof}

Observe  the following property of symmetric spaces  affiliated hyperfinite  type $II$ factors. 

\begin{prop}\label{prop:square}
Let $\cM$ be the hyperfinite $II_1$-factor $\cR$ equipped with a  faithful normal tracial state $\tau$ (or $\cR\bar{\otimes} \mathcal{L}(  H  )$ equipped with a semifinite faithful normal trace $\tau$). For any symmetric space $E(\cM,\tau)$, we have 
$$E(\cM,\tau)\approx E(\cM,\tau)\oplus E(\cM,\tau).$$
As a consequence, $E(L_\infty(0,1) \bar{\otimes} \cM, m\otimes \tau) \approx E(\cM,\tau)$. 
\end{prop}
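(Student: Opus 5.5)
The plan is to produce an explicit trace-preserving identification of $\cM$ with $M_2(\cM)$ (equivalently, with $\cM\oplus\cM$ up to a normalisation of the trace) and use it to split $E(\cM,\tau)$ along two complementary equivalent projections.

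\emph{Case $\cM=\cR$.} Choose a projection $p\in\cR$ with $\tau(p)=\frac12$. Since $\cR$ is a factor, $p\sim \mathbf 1-p$. Moreover, $p\cR p$ is again the hyperfinite $II_1$ factor, so $p\cR p\cong\cR$ via a $\ast$-isomorphism $\alpha$ with $\tau\circ\alpha=2\tau|_{p\cR p}$. Take a partial isometry $v$ with $v^*v=p$ and $vv^*=\mathbf 1-p$. For $x\in E(\cM)$, consider the four corners $pxp$, $px(\mathbf 1-p)$, $(\mathbf 1-p)xp$, $(\mathbf 1-p)x(\mathbf 1-p)$. The decomposition we want is not into the corners themselves but into the column pieces $xp$ and $x(\mathbf 1-p)$.

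Define
\[
\Phi:E(\cM)\to E(\cM)\oplus E(\cM),\qquad \Phi(x)=(x p,\; x(\mathbf 1-p)v).
\]
Both entries are elements of $E(\cM)$ with right support in $p$, so $\Phi$ lands in the subspace $Y_p=\{y\in E(\cM): y=yp\}$, taken twice. Its inverse is $(y_1,y_2)\mapsto y_1+y_2v^*$. Each map is bounded because $E$ is a normed $\cM$-bimodule, so $E(\cM)\approx Y_p\oplus Y_p$.

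It then remains to show $Y_p\approx E(\cM)$. Write $\mathbf 1=\sum_{k\ge1}q_k$ with $q_k\le p$... but this does not give a finite decomposition.

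A cleaner route is the halving trick: $E(\cM)\approx Y_p\oplus Y_{\mathbf 1-p}$ and $Y_{\mathbf 1-p}\cong Y_p$ via right multiplication by $v$. So the real claim reduces to $Y_p\approx E(\cM)$.

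For this, use a unitary-type identification. Since $\cR\cong\cR\bar\otimes M_2(\mathbb C)$ trace-preservingly, with $p$ corresponding to $\mathbf 1\otimes e_{11}$, the space $Y_p$ corresponds to the operators $x\otimes e_{11}+y\otimes e_{21}$, $x,y\in S(\cR)$, equipped with the norm $\|\mu(\cdot)\|_E$. Now
\[
|x\otimes e_{11}+y\otimes e_{21}|^2=(x^*x+y^*y)\otimes e_{11},
\]
so the norm of $(x,y)$ equals $\|(x^*x+y^*y)^{1/2}\|_{E(\cR,\tau/2\text{-scaled})}$.

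The rescaling of the trace by $\frac12$ dilates $\mu$ by $\sigma_{1/2}$ or $\sigma_2$. These dilations are bounded on $E(0,1)$ in both directions, with $\sigma_{1/2}$ bounded and $\sigma_2$ bounded by $2$. Hence the column space is isomorphic to $E(\cR)$ itself through the map $(x,y)\mapsto x\otimes e_{11}+y\otimes e_{21}$, and the same map with the trace fixed identifies the diagonal part.

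I would instead exploit this directly. Using $\cR\cong\cR\bar\otimes M_2$, the space $E(\cR)$ is isomorphic to $E(\cR\bar\otimes M_2)$ with the normalised trace. Its diagonal subspace $\{x\otimes e_{11}+y\otimes e_{22}\}$ is complemented: the projection $z\mapsto (\mathbf 1\otimes e_{11})z(\mathbf 1\otimes e_{11})+(\mathbf 1\otimes e_{22})z(\mathbf 1\otimes e_{22})$ is a bimodule contraction-sum. Moreover, $\mu(x\otimes e_{11}+y\otimes e_{22})$ is comparable, via dilations, to $\mu(x)$ and $\mu(y)$, which yields an isomorphism of the diagonal with $E(\cR)\oplus E(\cR)$.

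The main obstacle is showing the full space is isomorphic to the diagonal, which I expect to be the hardest step. For this I would use a Pełczyński decomposition: $E\approx E\oplus E\oplus(\text{off-diagonal})$ and the off-diagonal part is isomorphic to $E\oplus E$ by multiplying by the matrix unit $e_{12}$. Hence $E\approx (E\oplus E)\oplus(E\oplus E)$, and iterating gives $E\oplus E\approx E$ after grouping, since $E\approx F\oplus F$ with $F=E\oplus E$ implies $E\oplus E\approx F\oplus F\oplus F\oplus F\approx E\oplus E$... Concretely, $E\oplus E\approx(F\oplus F)\oplus(F\oplus F)\approx F\oplus F\approx E$, using $F\oplus F\approx E$ and associativity.

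\emph{Case $\cR\bar\otimes\mathcal L(H)$.} The same matrix argument applies with a projection $p$ that is infinite and satisfies $\mathbf 1-p\sim p$. Here $p(\cR\bar\otimes\mathcal L(H))p\cong\cR\bar\otimes\mathcal L(H)$ trace-preservingly, so no dilation is even needed.

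\emph{Consequence.} The algebra $L_\infty(0,1)\bar\otimes\cM$ is again hyperfinite of the same type ($II_1$, respectively $II_\infty$) and a factor-free argument is not needed: via a trace-preserving embedding $\iota$ as fixed above, I would write $\chi_{(0,1)}=\chi_{(0,1/2)}+\chi_{(1/2,1)}$ iteratively. Better: $L_\infty(0,1)\bar\otimes\cM$ contains $\cM$ as $\mathbf 1\otimes\cM$ complementably via the conditional expectation $\int\otimes\mathrm{id}$, which is a contraction on strongly symmetric spaces. Conversely, $\iota$ embeds it complementably in $\cM$, again via a conditional expectation. The Pełczyński decomposition method, combined with $E\approx E\oplus E$ just proved, then yields the isomorphism.
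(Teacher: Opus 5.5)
\textbf{There is a genuine gap: in the $II_1$ case your argument only proves $E\approx E^{4}$, and the final step from there is circular.}

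Write $E=E(\cR,\tau)$ and $E^{n}$ for the $n$-fold direct sum. Each of your matrix decompositions is correct, but each yields only $E\approx E^{4}$:
\begin{itemize}
\item the column space $Y_p=E(\cR)p$ is $\approx E\oplus E$ after a dilation, so $E\approx Y_p\oplus Y_p\approx E^{4}$;
\item diagonal plus off-diagonal gives the same relation.
\end{itemize}
So the reduction to $Y_p\approx E$ is the original problem in disguise, and the sentence asserting that the column space is isomorphic to $E(\cR)$ is circular.

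The decisive failure is the ``grouping'' step. In $E\oplus E\approx (F\oplus F)\oplus(F\oplus F)\approx F\oplus F$ with $F=E\oplus E$, the second isomorphism is $E^{8}\approx E^{4}$. Since $F\oplus F\approx E$, that is literally the statement $E\oplus E\approx E$ you are trying to prove. No bookkeeping can rescue this. From $E\approx E^{4}$ one only gets $E^{k}\approx E^{k+3}$ for $k\ge 1$, which is consistent with the invariant $n \bmod 3$. Gowers and Maurey constructed Banach spaces $X$ with $X\approx X^{4}$ but $X\not\approx X^{2}$.

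\textbf{How to repair the $II_1$ case.} You need either a second, independent relation or an infinite-sum structure.
\begin{itemize}
\item Within your framework, also use $\cR\cong\cR\bar\otimes M_3(\mathbb{C})$. The nine Schur blocks $(\mathbf 1\otimes e_{ii})z(\mathbf 1\otimes e_{jj})$ are complemented, and each is $\approx E$ after the dilation $\sigma_{1/3}$. Hence $E\approx E^{9}$, i.e.\ $E^{k}\approx E^{k+8}$. Combined with $E^{k}\approx E^{k+3}$ this gives $E^{2}\approx E^{10}\approx E^{7}\approx E^{4}\approx E$.
\item The paper instead uses projections $p_k$ with $\tau(p_k)=2^{-k}$. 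The space $W=E(\ell_\infty\bar\otimes\cR)$, with weights $2^{-k}$, is complemented in $E(\cR)$. Splitting off the first coordinate and rescaling gives $W\approx E(\cR)\oplus W$. Hence $E(\cR)\approx W\oplus X\approx E(\cR)\oplus W\oplus X\approx E(\cR)\oplus E(\cR)$.
\end{itemize}

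\textbf{The $II_\infty$ case.} For $\cR\bar\otimes\mathcal{L}(H)$ your first reduction does close, but you should say why. Take $p$ with $p\sim\mathbf 1-p\sim\mathbf 1$ and an isometry $u$ with $uu^*=p$. Then $y\mapsto yu$ is an isometry of $Y_p$ onto $E$, with inverse $z\mapsto zu^*$. So $E\approx Y_p\oplus Y_p\approx E\oplus E$ directly. Appealing to ``the same matrix argument'' would import the gap above.

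\textbf{The consequence.} Pe\l czy\'nski's decomposition method needs both spaces to be isomorphic to their squares. Knowing this for $E(\cM)$ alone is not enough in general. You must also note that $E(L_\infty(0,1)\bar\otimes\cM)$ is isomorphic to its own square; this follows by splitting $(0,1)$ into halves and rescaling the trace.
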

\begin{proof}
Letting $\ell_\infty $ be equipped with a faithful normal tracial state $\nu$ such that $\nu( \chi_{e_k  }) =\frac{1}{2^k}$, there exists a Banach space $X$ such that 
$$E(\ell_\infty  \bar{\otimes}  \cR)\oplus X\approx E(\cR) .$$
Therefore, we have 
$$E(\cR)\approx E(\ell_\infty \bar{\otimes}  \cR)\oplus X \approx   E(\cR)\oplus   E(\ell_\infty  \bar{\otimes}  \cR)   \oplus X \approx E(\cR)\oplus E(\cR).$$
Similarly, there exists a Banach space $Y$ such that 
$$E (\ell_\infty \bar{\otimes} \mathcal{R}\bar{\otimes}  \mathcal{L}(H)  ) \oplus Y \approx E(   \mathcal{R}\bar{\otimes} \mathcal{L}(H)  )  .$$
Therefore, we have 
\begin{align*}
E(   \mathcal{R}\bar{\otimes}  \mathcal{L}(H)  )  &  \approx  E(\ell_\infty  \bar{\otimes}   \mathcal{R}\bar{\otimes} \mathcal{L}(H)  ) \oplus Y \\
 & \approx    E (   \mathcal{R}\bar{\otimes}  \mathcal{L}(H)  )   \oplus    E (\ell_\infty \bar{\otimes}   \mathcal{R}\otimes\mathcal{L}(H)  )  \oplus Y \\
&\approx  E  (   \mathcal{R}\bar{\otimes} \mathcal{L}(H)  )  \oplus  E  (   \mathcal{R}\bar{\otimes}  \mathcal{L}(H)  )  .
\end{align*}

Observe that $$E(L_\infty(0,1) \bar{\otimes} \cM, m\otimes \tau)\hookrightarrow E(\cM,\tau) ,~E(\cM,\tau)  \hookrightarrow E(L_\infty(0,1) \bar{\otimes} \cM, m\otimes \tau)  $$ as complemented subspaces and 
$$E(L_\infty(0,1) \bar{\otimes} \cM, m\otimes \tau) \approx E(L_\infty(0,1) \bar{\otimes} \cM, m\otimes \tau)\oplus E(L_\infty(0,1) \bar{\otimes} \cM, m\otimes \tau) .$$
The last assertion follows  from Pelczynski's Decomposition Principle \cite[Theorem~2.2.3]{AK}, see also \cite{LT1}. 
\end{proof}

\begin{lem}\label{jinghao inverse lemma}  
Let $E(0,1)$ be a symmetric space in the sense of Lindenstrauss and Tzafriri.
Let    $\cM$ be a   semifinite von Neumann algebra   equipped with a semifinite faithful normal trace $\tau$. 
If both $E(0,1)$ and $E(0,1)^{\times}  $     possess  the Kruglov property, then
$$ \left(\mathscr{R}_{_\cM} \Big|_{Z_{E^\times}^2 \to E^\times}\right)^*\circ  \left(\mathscr{R}_{_\cM} \Big|_{Z_{E}^2 \to E}\right)={\rm id}.$$
\end{lem}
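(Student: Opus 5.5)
The identity says that the adjoint of $\mathscr{R}_{_\cM}\colon Z_{E^\times}^2(\cM,\tau)\to E^\times(\cN)$, with $\cN:=\cN_{_{L_\infty(0,1)\bar{\otimes}\cM}}$, composed with $\mathscr{R}_{_\cM}\colon Z_E^2(\cM,\tau)\to E(\cN)$, is the identity on $Z_E^2(\cM,\tau)$. We first fix what it means. By Lemma~\ref{from js lemma}, applied to $E$ and to $E^\times$ (both have the Kruglov property, and $E^\times$ is again strongly symmetric), both restrictions are bounded isomorphic embeddings. The adjoint of the second map goes from $E^\times(\cN)^*\supset E(\cN)$ (via trace duality) into $Z_{E^\times}^2(\cM,\tau)^*$. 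We regard $Z_E^2(\cM,\tau)$ as sitting inside $Z_{E^\times}^2(\cM,\tau)^*$ through the pairing $\langle x,y\rangle=\tau(xy)$. This is legitimate because $(Z_{E^\times}^2)^\times=Z_E^2$ up to equivalence of norms: on the $(0,1)$ part the Köthe dual of $E^\times$ is $E^{\times\times}=E$ by the Fatou property/separability, and on the $(1,\infty)$ part the dual of $L_2$ is $L_2$. The claim therefore reduces to the duality formula
$$\nu\big(\mathscr{R}_{_\cM}(x)\,\mathscr{R}_{_\cM}(y)\big)=\tau(xy),\qquad x\in Z_E^2(\cM,\tau),\ y\in Z_{E^\times}^2(\cM,\tau),$$
where $\nu$ is the trace on $\cN$. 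Indeed, once this holds, $\langle (\mathscr{R}_{_\cM})^*\mathscr{R}_{_\cM}x,y\rangle=\langle \mathscr{R}_{_\cM}x,\mathscr{R}_{_\cM}y\rangle=\tau(xy)=\langle x,y\rangle$ for all $y$, which is exactly the identity.

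The first step is to prove the formula on a dense class. For $x,y\in(L_1\cap L_2)(\cM,\tau)$, and in particular for bounded operators with $\tau$-finite support, Lemma~\ref{hilbertian}(1) gives the formula directly. Such operators lie in both $Z_E^2$ and $Z_{E^\times}^2$, since $L_1\cap L_\infty\subset E\cap E^\times$ on $(0,1)$ and they are square integrable on $(1,\infty)$.

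The second step extends the formula by continuity. Fix an elementary $y$. The map $x\mapsto \nu(\mathscr{R}_{_\cM}(x)\mathscr{R}_{_\cM}(y))$ is continuous on $Z_E^2$, because $\mathscr{R}_{_\cM}\colon Z_E^2\to E(\cN)$ is bounded, $\mathscr{R}_{_\cM}(y)\in E^\times(\cN)$, and the trace pairing $E\times E^\times\to\mathbb{C}$ is bounded. The map $x\mapsto \tau(xy)$ is also continuous on $Z_E^2$. So the two sides agree on the closure of the elementary operators in $Z_E^2$. Repeating the argument in $y$ with $x$ fixed, using boundedness of $Z_{E^\times}^2\to E^\times(\cN)$, extends the formula to the closure in the second variable.

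The main obstacle is density. If $E$ and $E^\times$ are both separable, the elementary operators are norm dense and the argument is complete. In general one of them may fail to be separable; for instance, if $E$ is separable then $E^\times$ has the Fatou property but need not be separable. Here I would work with the weak topologies $\sigma(Z_E^2,Z_{E^\times}^2)$ and $\sigma(E,E^\times)$. Every $x\in Z_E^2$ is the order limit of spectral truncations $x_n=x\,e^{|x|}(1/n,n)\,p_n$, with $p_n$ projections of finite trace increasing to $\mathbf 1$. One then needs $\mathscr{R}_{_\cM}x_n\to\mathscr{R}_{_\cM}x$ in $\sigma(E,E^\times)$; I expect to get this from normality of the functionals $\phi_y$ together with $L_1$-continuity of $\mathscr{K}$ and uniform boundedness of $\|\mathscr{R}_{_\cM}x_n\|_E$. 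Since $\tau(x_ny)\to\tau(xy)$ by normality, the formula passes to the limit. If at least one of $E$, $E^\times$ is separable, which happens whenever the Kruglov property is nontrivial, it suffices to approximate in norm in that variable and weakly in the other.
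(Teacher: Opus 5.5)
Your reduction to the bilinear identity $\nu\bigl(\mathscr{R}_{_\cM}(x)\,\mathscr{R}_{_\cM}(y)\bigr)=\tau(xy)$ for all $x\in Z_E^2(\cM,\tau)$ and $y\in Z_{E^\times}^2(\cM,\tau)$ is correct. When at least one of $E$, $E^\times$ is separable, it even avoids the K\"othe-dual bookkeeping the paper needs. Say $E$ is separable. Norm density gives the identity for all $x$ and elementary $y$. You can then pass from truncations $y_n$ to $y$, because:
\begin{itemize}
\item $\mathscr{R}_{_\cM}y_n\to\mathscr{R}_{_\cM}y$ in $L_1(\cN)$ with $\sup_n\|\mathscr{R}_{_\cM}y_n\|_{E^\times}<\infty$;
\item the fixed partner $\mathscr{R}_{_\cM}x$ can be norm-approximated by bounded operators in the order continuous space $E(\cN)$.
\end{itemize}
The case where $E^\times$ is separable is symmetric.

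The gap is the remaining case: $E$ has the Fatou property and neither $E$ nor $E^\times$ is separable. Your claim that this cannot occur ``whenever the Kruglov property is nontrivial'' is unsupported, and I believe it is false. The Kruglov property is guaranteed by $\alpha_E>0$, which is a dilation condition unrelated to order continuity ($L_{p,\infty}$ is non-separable with $\alpha=1/p$). One can check that suitable Orlicz--Lorentz spaces $\{x:\ t^{1/p}\mu(t;x)\in L_\Phi((0,1),dt/t)\}$, $1<p<\infty$, with $\Phi$ and its complementary function both failing $\Delta_2$, have both Boyd indices equal to $1/p$, yet neither the space nor its K\"othe dual is separable.

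In this case your proposed mechanism does not give $\mathscr{R}_{_\cM}x_n\to\mathscr{R}_{_\cM}x$ in $\sigma(E,E^\times)$. Boundedness in $E$ plus $L_1$-convergence only yields convergence against those $z\in E^\times$ that are norm limits of bounded operators, so it needs $E^\times$ to be order continuous. For example, in $L_{p,1}(0,1)$ the functions $w_n=n^{1/p}\chi_{(0,1/n)}$ are bounded in $L_{p,1}$ and tend to $0$ in $L_1$, yet $\int_0^1 w_n(t)\,t^{-1/p'}\,dt=p$ for all $n$. Normality of $\phi_z$ does not rescue this, since it only controls monotone nets. The sequence $\mathscr{R}_{_\cM}x_n$ is not monotone: $\mathscr{R}_{_\cM}$ is not positive because of the Rademacher factor. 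Nor can you split into positive parts, since $\mathscr{K}$ is only defined on $L_1$ and elements of $Z_E^2$ need not be integrable.

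What is missing is the weak continuity of $\mathscr{R}_{_\cM}:Z_E^2\to E(\cN)$, and likewise of $\mathscr{R}_{_\cM}:Z_{E^\times}^2\to E^\times(\cN)$. By Lemma~\ref{fact2}, this is equivalent to the adjoint sending $E^\times(\cN)$ into the K\"othe dual $(Z_E^2)^\times$, not merely into $(Z_E^2)^*$. The paper obtains this in two steps.
\begin{enumerate}
\item $\mathscr{R}_{_\cM}$ is also bounded from the order continuous space $Z_{L_1}^2$ into $L_1(\cN)$. Proposition~\ref{observation} then shows that $\mathscr{R}_{_\cM}^*z$ is a normal functional, i.e.\ lies in $(Z_E^2)^\times$, for every bounded $z\in\cN$.
\item Lemma~\ref{fact} extends this to all $z\in E^\times(\cN)$. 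Using the Fatou property, approximate $z$ by bounded $z_n$ in $\sigma(E^\times,E)$. Then $\mathscr{R}_{_\cM}^*z_n$ is $\sigma((Z_E^2)^\times,Z_E^2)$-Cauchy, and weak sequential completeness of the K\"othe dual produces an operator representing $\mathscr{R}_{_\cM}^*z$.
\end{enumerate}
With this in hand, truncations $x_n\to x$ converge in $\sigma(Z_E^2,(Z_E^2)^\times)$, their images converge in $\sigma(E,E^\times)$, and your bilinear argument goes through in full generality.
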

\begin{proof} 
For simplicity, we only consider the special case when $\cM=\cR\bar{\otimes}\mathcal{L}(H)$. In particular, we write  $\mathscr{R}_{_\cM} =\mathscr{R}$.

Applying Lemma \ref{from js lemma} to the space $E(0,1),$ we infer that the mapping 
$$\mathscr{R}:Z_E^2(\mathcal{R}\bar{\otimes} \mathcal{L}(H))\to E(\mathcal{R})$$
 is an isomorphic embedding.
 
By Lemma \ref{hilbertian}, 
for any finitely supported elements $x,y\in 
\mathcal{R}\bar{\otimes} \mathcal{L}(H),$ we have
$$\tau \left(
 \left(\mathscr{R} \right)(x)\cdot  \left(\mathscr{R} \right)(y) \right)=\left(\tau\otimes{\rm Tr}\right)(xy).$$

Applying Lemma \ref{from js lemma} to the space $E(0,1) ^{\times},$ we infer that the mapping $$\mathscr{R}:Z_{E^{\times}}^2(\mathcal{R}\bar{\otimes}\mathcal{L}(H))\to E^{\times}(\mathcal{R})$$ is an isomorphic embedding. By duality, we have that $$ \left(\mathscr{R}\Big|_{Z_{E^\times}^2 \to E^\times }\right) ^* :(E^{\times}(\mathcal{R}))^*\to(Z_{E^{\times}}^2(\mathcal{R}\bar{\otimes}  \mathcal{L}(H)))^*$$
 is a bounded operator.  Thus,
\begin{align}\label{eq1}\begin{split}
  (\tau\otimes{\rm Tr})(xy) =\langle\mathscr{R}(x),\mathscr{R}(y)\rangle&=\left\langle x, \left(\mathscr{R}\big|_{Z_{E^\times}^2 \to E^\times}\right)^*(\mathscr{R}y)\right\rangle   \\
&=\left\langle x,\left( \left(\mathscr{R}\big|_{Z_{E^\times}^2 \to E^\times}\right) ^*\circ\mathscr{R}\right)(y) \right\rangle.
\end{split}
\end{align}
On the other hand, since $\mathscr{R}:Z_{L_1}^2(\cR\bar{\otimes}\mathcal{L}(H))\to L_1(\cR)$ is bounded (see Lemma \ref{from js lemma} or \cite[Lemma 38]{JSZ}) and $L_1(0,1)\supset E(0,1)^\times$, it follows 
from  Proposition \ref{observation}  that 
$$ \left(\mathscr{R}\Big|_{Z_{L_1}^2 \to L_1}\right)^* 
=  
\left (\mathscr{R}\Big|_{Z_{E^\times }^2 \to E^\times}\right)^*
$$
on $L_1(\cR)^\times = \cR$. 
Moreover,  
 \begin{align}\label{intersection}\begin{split}
 \left(\mathscr{R}\big|_{Z_{E^\times}^2 \to E^\times}\right)^*(y)&= 
 \left(\mathscr{R}\big| _{Z_{L_1}^2 \to L_1}\right)^*(y) \\
 &\in Z_{L_1}^2(\cR\bar{\otimes}\mathcal{L}(H))^*  =
Z_{L_1}^2(\cR\bar{\otimes}\mathcal{L}(H))^\times  
\end{split}
\end{align} for any $y\in L_1(\cR)^\times = \cR $.
Observe that 
 $Z_{E^{\times}}^2(\mathcal{R}\bar{\otimes}\mathcal{L}(H))$ and  $  E^{\times}(\mathcal{R}) $ have the Fatou property \cite[Theorem 27]{DP2}.
 Since 
$ \left(\mathscr{R}\Big|_{Z_{E^\times}^2 \to E^\times }\right) ^*(y )  \in Z^2_{E^\times}(\cR\bar{\otimes }\mathcal{L}(H)) ^\times $ for all $y\in \cR $ (see \eqref{intersection}), it follows from 
Lemma~\ref{fact} 
 that    
\begin{align}\label{kothe}
 \left(\mathscr{R}\Big|_{Z_{E^\times}^2 \to E^\times }\right) ^* :E(\mathcal{R})^{\times \times}  \to Z_{E^\times }^2(\mathcal{R}\bar{\otimes} \mathcal{L}(H))^\times .
\end{align}
By \eqref{eq1} and \eqref{kothe}, we obtain that, for every finitely supported element $y\in \mathcal{R}\bar{\otimes}  \mathcal{L}(H),$   
\begin{align}\label{finite support}
\left(
 \left(\mathscr{R}\Big|_{Z_{E^\times}^2 \to E^\times}\right)^*\circ\mathscr{R}\right)(y)=y.
\end{align}
 
{\bf The case when $E(0,1)$ is separable}.
It follows from \eqref{finite support} and 
the boundedness of $ \left(\mathscr{R}\Big|_{Z_{E^\times}^2 \to E^\times}\right)^*\circ\mathscr{R}$  that  $$ \left(\mathscr{R}\Big|_{Z_{E^\times}^2 \to E^\times}\right) ^*\circ\mathscr{R}={\rm id}$$
on $Z_{E }^2(\mathcal{R}\bar{\otimes} \mathcal{L}(H)) $; 

{\bf The case when $E(0,1)$ has the Fatou property}.
By \eqref{kothe} (taking $E$ instead of $E^\times $) and the Fatou property of $E(0,1)$, we have
\begin{align}\label{Re}
  \left(\mathscr{R}\Big|_{Z_{E }^2 \to E  }\right) ^* :E(\mathcal{R})^\times   \to Z_{E ^\times  }^2(\mathcal{R}\bar{\otimes} \mathcal{L}(H)) .
\end{align}By Lemma \ref{fact2}, $\mathscr{R}$ is $\sigma(Z_E^2 , (Z_E^2)^\times )- \sigma (E,E^\times)$-continuous.  

%
%

Let 
$T:= \left(\mathscr{R}\Big|_{Z_{E^\times }^2 \to E ^\times  }\right) ^*:E(\mathcal{R})   \to Z_{E  }^2(\mathcal{R}\bar{\otimes} \mathcal{L}(H))  $. 
Observing  that  for any $x\in Z_{E^\times} ^2( \cR\bar{\otimes } \mathcal{L}(H) )$ and  $y\in E(\cR)  $, we have
$$
\tau
\left( \mathscr{R}(x)\cdot y \right)
=
\langle 
 \mathscr{R } x, y\rangle
= 
\left\langle  x,  \left(\mathscr{R}\big|_{Z_{E^\times}^2 \to E^\times}\right) ^* y\right\rangle  
 = \left\langle        x, T  y\right\rangle  
 = \left\langle    T  ^* (x),   y\right\rangle  . $$
This implies that $T^*(x)=\left(
 \left(\mathscr{R}\big|_{Z_{E^\times}^2 \to E^\times}\right) ^* \right)^* (x) \in E(\cR)^\times $, see e.g. \cite[Theorem 37]{DP2}. 
By 
Lemma  \ref{fact2}, $T=\left(\mathscr{R}\big|_{Z_{E^\times}^2 \to E^\times}\right) ^* $ is $(E,E^\times )-(Z_E^2 ,(Z_E^2)^\times)$-continuous.


Since finitely supported bounded elements are  dense in   $Z^2_E(\cR\bar{\otimes}\mathcal{L}(H))$   with respect to    $\sigma\left(Z^2_E,(Z^2_E)^\times \right)$-topology, it follows from \eqref{finite support}  that 
 $$ \left(\mathscr{R}\Big|_{Z_{E^\times}^2 \to E^\times}\right) ^*\circ\mathscr{R}={\rm id}$$
on $Z_E^2(\mathcal{R}\bar{\otimes} \mathcal{L}(H))$. 
\end{proof}
 
  Now, we present the proof for Theorem \ref{main}. 
\begin{proof}
The proofs for the cases of general semifinite von Neumann algebras $\cM$ and the hyperfinite $II_\infty $ factor $\cR\bar{\otimes} \mathcal{L}(H)$ are the same. 
Below, we present the proof for the case of  $\cR\bar{\otimes}  \mathcal{L}(H)$.

Set
$$X_0=Z_E^2(\mathcal{R}\bar{\otimes} \mathcal{L}(H)), ~X_1=E(\mathcal{R}),$$
$$T_0=\mathscr{R} , ~T_1= \left(\mathscr{R}\Big|_{Z_{E^\times}^2 \to E^\times }\right) ^*.$$ 
By Lemma \ref{jinghao inverse lemma}, the spaces $X_0$ and $X_1$ and the mappings $T_0$ and $T_1$ satisfy the assumptions in Lemma \ref{standard lemma}. Applying Lemma \ref{standard lemma} to the spaces $X_0$ and $X_1$ and to the mappings $T_0$ and $T_1$,
 we obtain that $Z_E^2(\mathcal{R}\bar{\otimes} \mathcal{L}(H))$ is isomorphic to a complemented subspace in $E(\mathcal{R}).$

The mapping $$x\to x\otimes e_{0,0}$$ is an isomorphic embedding of $E(\mathcal{R})$ into $Z_E^2(\mathcal{R}\bar{\otimes} \mathcal{L}(H)),$ whose image is complemented, where $e_{0,0}$ stands for the $(0,0)$-th matrix unit. 
 The last assertion follows  from Pelczynski's Decomposition Principle \cite[Theorem 2.2.3]{AK},  by observing that (see Proposition \ref{prop:square})
$$E(\cR)\oplus E(\cR) \approx E(\cR)$$   and $$Z_E^2(\mathcal{R}\bar{\otimes} \mathcal{L}(H))\oplus Z_E^2(\mathcal{R}\bar{\otimes} \mathcal{L}(H))\approx Z_E^2(\mathcal{R}\bar{\otimes} \mathcal{L}(H)).$$
\end{proof}

\begin{rmk}\label{KAScommutative}
Note that the Kruglov operator $\mathscr{K}_{_{L_{\infty}(0,\infty)}}$ maps  elements from  $L_1(0,\infty)$ to $L_1(\cA)$ for some abelian von Neumann algebra $\cA$ equipped with a faithful normal tracial state. 
Indeed, if $f,g\in L_1(0,\infty)$ are real-valued, then, by \cite[Corollary 33(iv)]{JSZ}, $\mathscr{K}_{_{L_{\infty}(0,\infty)}}f$ and $\mathscr{K}_{_{L_{\infty}(0,\infty)}}g$ are commuting self-adjoint operators affiliated with $\cN_{_{L_{\infty}(0,\infty)}}.$ Set now $\cA$ to be the von Neumann algebra generated by (the spectral projections of) elements $\{\mathscr{K}_{_{L_{\infty}(0,\infty)}}f\}_{f\in L_1(0,\infty)}.$ 
\end{rmk}

 \appendix

\section{Properties of adjoint operators}

The following fact is folklore in the commutative setting, see e.g. \cite[Lemma 1 and Theorem 1]{AM} and \cite{Gribanov}. 
\begin{lem}\label{fact}
Let $\cM$ and $\cN$ be  von Neumann algebras equipped with semifinite faithful normal traces $\tau$ and $\nu$ respectively. 
Assume that $E_1(\cM,\tau) $ and $E_2(\cN,\nu )$ are strongly  symmetric spaces.
Assume that  $$T:E_1(\cM,\tau)\to E_2(\cN,\nu)$$ is  a bounded operator such that $T^*:(L_1\cap L_\infty)  (\cN,\nu)\to E_1(\cM,\tau)^\times$,  
  where $T^*$ stands for the conjugate operator  of $T$.

Assume, in addition that, $\cN$ is $\sigma$-finite or $E_2(\cN,\nu)^\times \subset S_0(\cN,\nu)$. 
If both $E_1(\cM,\tau)$ and $E_2(\cN,\nu )$ have the Fatou property, then 
  $$T^*(y) \in E_1(\cN,\nu)^\times$$ for all $y \in  E_2(\cN,\nu)^\times $. 
\end{lem}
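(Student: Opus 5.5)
We read the conclusion as $T^*(y)\in E_1(\cM,\tau)^\times$; the space $E_1(\cN,\nu)^\times$ in the statement is presumably a typo. The plan is to approximate $y$ by elements of $(L_1\cap L_\infty)(\cN,\nu)$, where the hypothesis on $T^*$ applies, and then pass to the limit using the Fatou property.

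\textbf{Reductions.} Write $y=y_1-y_2+i(y_3-y_4)$ with $0\le y_j\in E_2(\cN,\nu)^\times$ and $\|y_j\|_{E_2^\times}\le\|y\|_{E_2^\times}$. By linearity of $T^*$ it suffices to treat $y\ge0$.

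\textbf{Approximation of $y$.} Since either $\cN$ is $\sigma$-finite or $y\in S_0(\cN,\nu)$, there are projections $p_n\uparrow\mathbf 1$ with $\nu(p_n)<\infty$ commuting with $y$. In the second case take $p_n=e^{y}(1/n,n]$. In the $\sigma$-finite case, intersect these spectral projections with an increasing sequence of $\nu$-finite projections, which can be chosen in the commutant of $y$ inside $\cN$. Put $y_n=p_n y\,e^{y}[0,n]\,p_n$. Then $y_n\in(L_1\cap L_\infty)(\cN,\nu)$, $0\le y_n\uparrow y$, and $y_n\le y$ gives $\|y_n\|_{E_2^\times}\le\|y\|_{E_2^\times}$.

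\textbf{Uniform bound.} By hypothesis $z_n:=T^*(y_n)\in E_1(\cM,\tau)^\times$. Since $E_1$ has the Fatou property, the norm of $E_1^\times$ is computed by duality against $E_1$, so
$$\|z_n\|_{E_1^\times}=\sup_{\|x\|_{E_1}\le1}|\nu(T(x)y_n)|\le\|T\|\,\|y\|_{E_2^\times}.$$

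\textbf{Passing to the limit.} For every $x\in E_1$ we have $T(x)\in E_2$, so $T(x)y\in L_1(\cN,\nu)$. Because $y_n$ is a normal truncation of $y$, the functional $a\mapsto\nu(T(x)a)$ is normal on $E_2^\times$, and we obtain $\langle x,z_n\rangle=\nu(T(x)y_n)\to\nu(T(x)y)=\langle x,T^*(y)\rangle$. Thus $z_n\to T^*(y)$ pointwise on $E_1$, and the limit functional has norm at most $\|T\|\,\|y\|_{E_2^\times}$.

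\textbf{Main obstacle.} It remains to show that this pointwise limit is again represented by an element of $E_1^\times$, i.e.\ that it is a normal functional. A norm-bounded $\sigma(E_1^\times,E_1)$-Cauchy sequence need not converge in $E_1^\times$ for an arbitrary space, so the Fatou property must be used here.
\begin{enumerate}
\item Restrict to $p(L_1\cap L_\infty)p$ for $\tau$-finite projections $p\in\cM$. There the sequence $pz_np$ is bounded in $L_1(p\cM p)$, since $E_1^\times(p\cM p)\subset L_1(p\cM p)$.
\item Show that $(pz_np)$ is uniformly integrable, which gives weak relative compactness in $L_1(p\cM p)$. Uniform integrability should follow from the uniform $E_1^\times$ bound together with the Fatou (maximality) property of $E_1^\times$. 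The weak limit is forced to coincide with $pT^*(y)p$ on $p\cM p$.
\item Glue the local limits along $p\uparrow\mathbf 1$, using again that $\cN$ is $\sigma$-finite or $E_2^\times\subset S_0$.
\item Apply the Fatou property of $E_1^\times$ (Köthe duals always have it) to the increasing compressions. This yields an element $z\in E_1^\times$ with $\|z\|_{E_1^\times}\le\|T\|\,\|y\|_{E_2^\times}$ and $\tau(xz)=\nu(T(x)y)$ for all $x\in E_1$, i.e.\ $T^*(y)=z\in E_1(\cM,\tau)^\times$.
\end{enumerate}
If uniform integrability in step 2 proves delicate, I would fall back on the Fatou property of $E_1$ itself. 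It identifies $E_1$ with $E_1^{\times\times}$, so the unit ball of $E_1$ is $\sigma(E_1,E_1^\times)$-closed, and a Krein–Šmulian style argument would then give normality of the limit functional.
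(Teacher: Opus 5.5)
Your reduction, approximation, uniform bound and pointwise convergence $T^*(y_n)\to T^*(y)$ on $E_1$ follow the paper's outline. You are also right that the conclusion should read $E_1(\cM,\tau)^\times$. The gap is the step you label the main obstacle, and that step is the entire content of the lemma. The paper settles it by invoking the $\sigma(E_1^\times,E_1)$-sequential completeness of $E_1^\times$ for Fatou $E_1$ ([DK, Proposition 3.1]), a Vitali--Hahn--Saks/Akemann-type theorem. Your sketch does not replace this.

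In step 2, uniform integrability does not follow from a uniform $E_1^\times$-bound plus the Fatou property. For $E_1=L_\infty(0,1)$, $E_1^\times=L_1(0,1)$, the sequence $n\chi_{(0,1/n)}$ is bounded but not uniformly integrable. What does work on a corner is the Cauchy property: $(pz_np)$ is weakly Cauchy in $L_1(p\cM p)$, hence weakly convergent by Akemann's weak sequential completeness of preduals. That is a deep input you would have to name.

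Steps 3--4 then fail for a structural reason. The identity $\tau(xz)=\nu(T(x)y)$ for $x\in p(L_1\cap L_\infty)(\cM,\tau)p$, $p$ $\tau$-finite, extends to all $x\in E_1$ only if $x\mapsto\nu(T(x)y)$ is already normal, or if $E_1$ is order continuous. In the latter case $E_1^\times=E_1^*$ and the lemma is trivial anyway. A priori $T^*(y)-\phi_z$ could be a nonzero functional vanishing on all such $x$, like a Banach limit at infinity on $L_\infty(0,\infty)$. Excluding this is exactly the global Vitali--Hahn--Saks-type statement. Moreover, ``increasing compressions'' are not available: $T$ is not positive, so $z_n$ need not be positive, and even for $z\ge0$ the map $p\mapsto pzp$ is not monotone unless $p$ commutes with $z$. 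Finally, the hypothesis on $\cN$ has no bearing on gluing in $\cM$.

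The Krein--\v{S}mulian fallback cannot work either. $E_1^\times$ separates the points of $E_1$, so it is weak$^*$-dense in $E_1^*$ and is not weak$^*$-closed unless $E_1^\times=E_1^*$. Krein--\v{S}mulian characterizes weak$^*$-closed convex sets and gives no weak$^*$-sequential closedness for such a subspace. The $\sigma(E_1,E_1^\times)$-closedness of the unit ball of $E_1$ supplied by the Fatou property is a closedness statement about $E_1$, not a completeness statement about $E_1^\times$ inside $E_1^*$. Already for $E_1=L_\infty(0,1)$ the required fact is the weak sequential completeness of $L_1$. Commutatively you could finish by applying Vitali--Hahn--Saks to $A\mapsto\int_A x\,z_n$ for each fixed $x\in E_1$. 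Noncommutatively you need the corresponding theorem, which is what the paper imports.

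A smaller issue: in the $\sigma$-finite case, $\nu$-finite projections commuting with $y$ need not exist. In $\mathcal{L}(L_2(0,1))$ with the trace, multiplication by $t$ lies in $L_\infty=L_1^\times$, and its relative commutant is a diffuse masa whose nonzero projections all have infinite trace. Monotonicity is not needed, though; the paper uses a non-monotone approximation in $\sigma(E_2^\times,E_2^{\times\times})=\sigma(E_2^\times,E_2)$.
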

\begin{proof}
 By assumption, for any $y\in (L_1\cap L_\infty)  (\cN,\nu)   $,  we have
$$\tau(T( x ) y ) = \tau(x T^*(y)),~\forall x\in E_1(\cM,\tau). $$ 
Note that \begin{align*}
\left|\tau(x T^*(y)) \right| &=\left| \tau(T(x)y ) \right|\\
& \le \norm{Tx}_{E_2}\norm{y}_{E_2 ^\times } \\
&\le \norm{T}_{E_1\to E_2} \norm{x}_{E_1} \norm{y}_{E_2 ^\times },~\forall x\in E_1(\cM,\tau) ,
\end{align*}
i.e.,   $\norm{T^*(y)}_{E_1^\times}  \le  \norm{T}_{E_1\to E_2}   \norm{y}_{E_2 ^\times } $.

Since  $\cN$ is $\sigma$-finite or $E_2(\cN,\nu)^\times \subset S_0(\cN,\nu)$, it follows that  
for any $y\in E_2(\cN,\nu)^\times $, there exists a sequence 
$y_n\in (L_1\cap L_\infty )(\cN,\nu) $ such that 
$y_n\to y$ as $n\to \infty $ in the $\sigma(E_2^\times , E_2^{ \times\times} )$-topology, see e.g.  \cite[Proposition 3.0.9]{FGHS}. 
Since $E_2(\cN,\nu)$ has the Fatou property, it follows from \cite[Theorem 32]{DP2} that $$y_n\to y \mbox{ as } n\to \infty $$ in the $\sigma(E_2^\times , E_2)$-topology. 
Therefore, we have
$$\tau(T( x ) y )  =\lim_{n\to \infty} \tau(T(x)y_n) =
\lim_{n\to \infty} \tau(x T^*(y_n)) $$ 
 for any $x\in E_1(\cM,\tau)$.
This implies that 
$T^*(y_n)$ is a $\sigma(E _1^\times, E_1)$-Cauchy sequence in $E_1 (\cM,\tau)^\times$.
Since $E _1 (\cM,\tau)$ and 
 $E _1 (\cM,\tau)^\times$ have the Fatou property~\cite[Theorem 27]{DP2}, it follows that $E _1 (\cM,\tau)^\times$ is $\sigma(E_1^\times, E_1 )$-sequentially complete\cite[Proposition 3.1]{DK}.
Hence, there exists $z\in E_1 (\cM,\tau)^\times$ such that $$T^*(y_n)\to z$$ as $n\to \infty $ in the $\sigma(E_1^\times, E_1)$-topology.
In particular, 
$$ ( T^*(y) )(x) =\tau(T(x)y)=\lim_{n\to \infty} \tau(x T^*(y_n))= \tau(x z)$$
for all $x\in E_1(\cM,\tau)$. 
Hence, $T^*(y)=z \in E_1(\cM,\tau)^\times$. 
\end{proof}

We also need the following observation. 
\begin{prop}\label{observation}
Let $T$ be a bounded linear operator from a strongly symmetric space $E(\cM,\tau)$ into another one $F(\cN,\nu)$.
Let $E_1(\cM,\tau)\subset E(\cM,\tau)$ and $F_1(\cN,\nu)\subset F(\cN,\nu)$ be two strongly symmetric spaces with $$T:E_1(\cM,\tau)\to F_1(\cN,\nu).$$ 
If $E(\cM,\tau)$ has order continuous norm, then for any 
$y\in F(\cN,\nu)^\times $, we have $\left(
T|_{E_1\to F_1 }\right)^*(y) \in E_1(\cM,\tau) ^\times ,$   $  \left(T|_{E\to F}\right)^*(y)\in E(\cM,\tau)^\times $ and 
$$(T|_{E\to F})^* (y)  = (T|_{E_1 \to F_1})^*(y).$$
\end{prop}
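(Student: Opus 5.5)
The plan is to identify each adjoint as a normal functional and then use the embedding $E_1\subset E$ to compare them. Fix $y\in F(\cN,\nu)^\times$. For every $x\in E(\cM,\tau)$,
$$\big((T|_{E\to F})^*y\big)(x)=\nu(T(x)y).$$
Since $E(\cM,\tau)$ has order continuous norm, every bounded functional on $E(\cM,\tau)$ is normal. Hence $E(\cM,\tau)^*=E(\cM,\tau)^\times$ via trace duality (\cite[Theorem 37]{DP2}). So there is $z\in E(\cM,\tau)^\times$ with
$$(T|_{E\to F})^*(y)=\phi_z,\qquad \nu(T(x)y)=\tau(xz)\quad\text{for all } x\in E(\cM,\tau).$$
This gives the second membership claim.

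For the other claims, note first that $F_1\subset F$ and $E_1\subset E$ are inclusions of symmetric spaces. Therefore $F^\times\subset F_1^\times$ and $E^\times\subset E_1^\times$. These inclusions follow from the definition of the Köthe dual, because the unit ball of $E_1$ sits inside a multiple of the unit ball of $E$ (a continuous inclusion by the closed graph theorem, as both embed continuously in $S(\cM,\tau)$). In particular $y\in F_1^\times$, so $y$ defines a functional on $F_1$, and $(T|_{E_1\to F_1})^*(y)$ is the functional $x\mapsto \nu(T(x)y)$ for $x\in E_1$. Since $x\in E_1\subset E$, this equals $\tau(xz)$, with the same $z$ as above. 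Moreover $z\in E^\times\subset E_1^\times$. Hence $(T|_{E_1\to F_1})^*(y)=\phi_z|_{E_1}$ is given by an element of $E_1(\cM,\tau)^\times$. This yields the first membership claim and the equality $(T|_{E\to F})^*(y)=(T|_{E_1\to F_1})^*(y)$, both being represented by $z$.

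The only delicate point is the meaning of the equality, since the two adjoints a priori live in different dual spaces. I would make this precise by identifying both with the same element $z\in E^\times\subset E_1^\times$ under trace duality. I also need to check that the pairing $\nu(T(x)y)$ is the same whether $y$ is viewed in $F^\times$ or in $F_1^\times$; this holds because both pairings are given by the same trace expression. I expect the main obstacle to be justifying the normality of all functionals on an order continuous $E$, and I would cite \cite{DP2,DPS} for that.
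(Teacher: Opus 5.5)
Your proposal is correct and follows essentially the paper's argument: order continuity of $E$ gives $E(\cM,\tau)^*=E(\cM,\tau)^\times$, and the two adjoints agree on $E_1$ because both are given by the same trace pairing $\nu(T(x)y)$. The one difference is how you get $(T|_{E_1\to F_1})^*(y)\in E_1(\cM,\tau)^\times$. You deduce it from $z\in E^\times\subset E_1^\times$, whereas the paper checks normality of this functional on $E_1$ directly (from $x_\alpha\downarrow 0$ in $E_1$ one gets $\|x_\alpha\|_E\to 0$) and then invokes \cite[Theorem 5.2.9]{DPS}. Your route is slightly more economical and makes explicit that both adjoints are represented by the same element $z$.
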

\begin{proof}
For any $y \in F(\cN,\nu)^\times $ and $x\in E_1(\cM,\tau)\subset E(\cM,\tau)  $, we have 
\begin{align}\label{TEF}\begin{split}
\left(T|_{E\to F}\right)^*(y) (x)  & = \tau\left(y T|_{E\to F}(x)\right) \\
                             & = \tau
\left(y T|_{E_1\to F_1}(x)\right) \\
                             & =\left(
T|_{E_1\to F_1 }\right)^*(y) (x) .
                        \end{split}
\end{align}

Observe that 
for any $y\in   F(\cN,\nu)^\times \stackrel{\tiny \mbox{\cite[Prop. 4.3.12]{DPS}}}{\subset} F_1(\cN,\nu)^\times $ and  any net $\{x_\alpha\}\in 
  E_1(\cM,\tau)$ with $x_\alpha\downarrow 0$,  we have
\begin{align*}  
 \left(
T|_{E_1\to F_1 }\right)^*(y) (x_\alpha) & =
    |\tau( T(x_\alpha)   y   )|\\
   & \le \norm{ T (x_\alpha)   }_{F } \norm{y}_{F^\times}\\
 & \le 
\norm{  T     }_{E\to F }
\norm{x_\alpha}_{E}\norm{y}_{F^\times  } 
\to_\alpha 0. 
\end{align*}  
Hence, 
by \cite[Theorem 5.2.9]{DPS}, we have 
\begin{align}\label{TE1F1} 
\left(
T|_{E_1\to F_1 }\right)^*(y) \in E_1(\cM,\tau) ^\times  
\end{align}
for any $y\in F(\cN,\nu)^\times$.
Since $E(\cM,\tau)$ has order continuous norm, it follows that  
$$ \left(T|_{E\to F}\right)^*(y)\in E(\cM,\tau)^* =E(\cM,\tau)^\times 
$$ 
for any $y\in F(\cN,\nu)^\times$.
This together with \eqref{TEF} and \eqref{TE1F1} completes
the proof.  
\end{proof}

 The following lemma was established in  \cite[Lemma 3.0.5]{FGHS} (see also \cite{AC17,AC18}), whose commutative counterpart can be found in 
  \cite{KR2}. 
\begin{lem} 
\label{fact2}
Let $\cM_1$ and $\cM_2$ be  semifinite von Neumann algebras equipped with semifinite faithful normal traces $\tau_1$ and $\tau_2$, respectively.
Let $E(\cM_1,\tau_1)$ and $F(\cM_2,\tau_2)$ be two noncommutative strongly  symmetric  spaces.   
A bounded linear operator $T$ from $E(\cM_1,\tau_1)$ to $F(\cM_2,\tau_2)$  is  $\sigma(E,E^\times)-\sigma(F,F^\times)$-continuous if and only if $T^*$ maps $F(\cM_2,\tau_2)^\times $ into $E(\cM_1,\tau_1)^\times$. 
\end{lem}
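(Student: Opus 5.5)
The plan is to reduce the statement to the standard duality fact for dual pairs. A linear map between two spaces carrying the weak topologies of separating dual pairs is weakly continuous exactly when the adjoint maps the second pairing space into the first.

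\emph{Setting up the dual pairs.} I first check that $\langle E(\cM_1,\tau_1),E(\cM_1,\tau_1)^\times\rangle$ under $(x,z)\mapsto\tau_1(xz)$ is a separating dual pair, and likewise for $F$.
\begin{itemize}
\item By strong symmetry and the facts recalled in Section~2, each $z\in E^\times$ gives a bounded functional $\phi_z\in E^*$.
\item By \cite[Corollary 4.3.9]{DPS}, $E^\times$ separates the points of $E$.
\item Conversely, $E$ separates the points of $E^\times$. If $\tau_1(xz)=0$ for all $x\in E$, test against $x=p\,u^*$, where $z=u|z|$ is the polar decomposition and $p$ runs over $\tau_1$-finite spectral projections of $|z|$ lying in $E$. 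This forces $z=0$.
\end{itemize}
So the identification $z\mapsto\phi_z$ is injective, and the statement ``$T^*$ maps $F^\times$ into $E^\times$'' means: for every $y\in F^\times$, the functional $\phi_y\circ T\in E^*$ equals $\phi_z$ for some (necessarily unique) $z\in E^\times$.

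\emph{The easy direction ($\Leftarrow$).} Suppose $T^*(F^\times)\subset E^\times$, and let $x_\alpha\to x$ in $\sigma(E,E^\times)$. For each $y\in F^\times$,
$$\nu\bigl(T(x_\alpha)y\bigr)=\tau_1\bigl(x_\alpha T^*y\bigr)\to\tau_1\bigl(xT^*y\bigr)=\nu\bigl(T(x)y\bigr).$$
Here $\nu$ denotes $\tau_2$. This is precisely $\sigma(F,F^\times)$-convergence of $T(x_\alpha)$ to $T(x)$.

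\emph{The converse ($\Rightarrow$), which is the main step.} Assume $T$ is weak--weak continuous and fix $y\in F^\times$. The functional $f=\phi_y\circ T$ is $\sigma(E,E^\times)$-continuous on $E$, so I must show every such functional is of the form $\phi_z$ with $z\in E^\times$.
\begin{itemize}
\item By the definition of the weak topology there are $z_1,\dots,z_n\in E^\times$ and $C>0$ with $|f(x)|\le C\max_i|\tau_1(xz_i)|$ for all $x\in E$.
\item Hence $f$ vanishes on $\bigcap_i\ker\phi_{z_i}$.
\item The elementary linear-algebra lemma on functionals vanishing on a finite intersection of kernels then gives scalars $c_i$ with $f=\sum_i c_i\phi_{z_i}=\phi_z$, where $z=\sum_i c_iz_i\in E^\times$.
\end{itemize}
Therefore $T^*y=z\in E^\times$, as required.

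I expect no serious obstacle. The only points needing care are the identification of $\sigma(E,E^\times)$-continuous functionals with $E^\times$, and the separation properties ensuring $T^*y$ is well defined as an element of $E^\times$; both are handled above.
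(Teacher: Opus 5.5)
Your proof is correct. The paper gives no argument of its own for this lemma and simply quotes \cite[Lemma 3.0.5]{FGHS} (with a commutative version in \cite{KR2}). What you supply is the standard dual-pair proof behind such results. The direction ($\Leftarrow$) is transposition of the trace pairing. For ($\Rightarrow$), $\phi_y\circ T$ is $\sigma(E,E^\times)$-continuous, and the finite-intersection-of-kernels lemma shows every such functional equals $\phi_z$ for some $z\in E^\times$.

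One side remark should be fixed, although nothing depends on it. Testing only against $\tau_1$-finite \emph{spectral} projections of $|z|$ can fail. For example, take $E=L_1(\cM_1)$ and $E^\times=\cM_1$ with $\cM_1$ a $II_\infty$ factor, and let $z=\mathbf{1}$; its only $\tau_1$-finite spectral projection is $0$. Instead, take any nonzero $\tau_1$-finite projection $p\le s(|z|)$, which exists by semifiniteness, and note that $\tau_1(pu^*z)=\tau_1(p|z|p)>0$. This still requires $E$ to contain the $\tau_1$-finite projections. That holds whenever $(L_1\cap L_\infty)(\cM_1)\subset E$, e.g.\ for the spaces $E(\cM_1,\tau_1)$ built from symmetric function spaces, but not for every abstract symmetric space.

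In any case, the lemma only concerns the image $\{\phi_z: z\in E^\times\}\subset E^*$. Injectivity of $z\mapsto\phi_z$ is therefore never used, so you can simply drop that step.
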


\end{document}